\documentclass[12pt]{amsart}
\usepackage[pagewise]{lineno}
\usepackage{amscd,amssymb}
\usepackage[arrow,matrix]{xy}
\usepackage[plainpages,backref,urlcolor=blue]{hyperref}
\usepackage{mathrsfs}
\usepackage{amsthm}
\usepackage{bm}
\usepackage{amscd}
\usepackage[shortlabels]{enumitem}
\usepackage{mathdots}
\usepackage{mathrsfs}
\usepackage{graphics}
\usepackage{mathtools}

\topmargin=0.1in
\textwidth=5.95in
\textheight=8.60in
\oddsidemargin=0.3in
\evensidemargin=0.3in

\theoremstyle{plain}
\newtheorem{thm}[subsection]{Theorem}
\newtheorem{lem}[subsection]{Lemma}
\newtheorem{prop}[subsection]{Proposition}

\newtheorem{claim}[subsection]{Claim}

\theoremstyle{definition}
\newtheorem{rk}[subsection]{Remark}
\newtheorem{definition}[subsection]{Definition}

\newtheorem{ex}[subsection]{Example}

\newcommand{\ov}{\overline}
\newcommand{\bb}{\mathbb}

\newcommand{\qe}{\qquad\hfill \square}

\newcommand{\Mod}[1]{\,\text{\rm mod}\,#1}

\newcommand{\Sing}{\text{\rm Sing}}

\newcommand{\bg}[1]{\mathbf{#1}}

\setcounter{tocdepth}{1}

\begin{document}
	\date{}
		
	\title[On algebraic surfaces associated to line arrangements]{ On algebraic surfaces associated to line arrangements}
		
	\author[ZHENJIAN WANG]{ ZHENJIAN WANG  }
	\address{Universit\'e C$\hat{\rm o}$te d'Azur, CNRS,  LJAD, UMR 7351, 06100 Nice, France.}
	\email{wzhj01@gmail.com}
		
	\subjclass[2010]{Primary 32S22, Secondary 14J17, 14J70, 32S25 }
		
	\keywords{algebraic surface, line arrangement, Milnor fiber, Chern number}
		
	\begin{abstract}
	For a line arrangement in the complex projective plane $\bb{P}^2$, we investigate the compactification $\ov{F}$ of the affine Milnor fiber in $\bb{P}^3$ and its minimal resolution $\widetilde{F}$. We compute the Chern numbers in terms of the combinatorics of the line arrangement, then we show that the minimal resolution is never a quotient of a ball; in addition, we also prove that $\widetilde{F}$ is of general type when the arrangement has only nodes or triple points as singularities.
	\end{abstract}
	\maketitle
   \tableofcontents

\section{Introduction}
A {\bf ball quotient} is a smooth projective surface that is biholomorphic to $\mathbf{B}/\Gamma$, where
$$
\mathbf{B}=\{(z,w)\in\bb{C}^2: |z|^2+|w|^2<1\}
$$
equipped with the K\"ahler metric whose K\"ahler form is given by
$$
\omega_{\mathbf{B}}=-\sqrt{-1}\partial\ov{\partial}\log(1-|z|^2-|w|^2),
$$
 and $\Gamma$ is a discrete cocompact subgroup of isometries of ${\bf B}$. A smooth projective surface $X$ is called of {\bf general type} if its canonical divisor $K_X$ is big; and for such surfaces, we have the celebrated Miyaoka-Yau inequality, namely,
\begin{equation}\label{eq: MYineq}
c_1^2(X)\leq 3c_2(X),
\end{equation}
where the equality holds if and only if $X$ is a ball quotient, see \cite{Yau}.

Let $\mathcal{A}=\{L_1,\cdots,L_d\}$ be a line arrangement in the complex projective plane $\bb{P}^2$, where $L_i: \ell_i(x,y,z)=0,\ i=1,\cdots, d$ and let $Q=\ell_1\ell_2\cdots \ell_d$ be the defining polynomial of $\mathcal{A}$. In \cite{HI}, F. Hirzebruch considers the Kummer covers of $\bb{P}^2$ branched over the arrangement $\mathcal{A}$, and by desingularization, he obtains smooth algebraic surfaces of general type; in addition, he also shows that for some line arrangements, the associated surfaces are ball quotients. Later, Hirzebruch's method was extended to a more general setup, see for instance \cite{Ma}.

In this article, we consider another construction of surfaces associated to the arrangement $\mathcal{A}$. Let $F: Q=1$ be the affine {\bf Milnor fiber} in $\bb{C}^3$ of $\mathcal{A}$, $V(Q): Q=0$ be the union of lines contained in $\mathcal{A}$ and $M=\bb{P}^2\setminus V(Q)$ be the complement of $\mathcal{A}$. Then there is a natural Galois covering map $\rho: F\to M$ of degree $d$. Let $h: F\to F$ be given by $h(x)=\exp(2\pi\sqrt{-1}/d)\cdot x$ be the multiplication by a primitive $d$-root of unity. As is shown in \cite{DP11}, $H^1(F)$ admits a mixed Hodge structure (MHS) with only two two weights: 1 and 2, for which the induced morphism by $h$, namely, $h^*: H^1(F)\to H^1(F)$ is a morphism of MHSs. Moreover, in \cite{DP11}, the authors show that
$$
W_1H^1(F)=H^1(F)_{\neq 1}
$$
and
$$
Gr_2^WH^1(F)=H^1(F)_1=\rho^*H^1(M),
$$
where
$$
\quad H^1(F)_{\neq 1}=\ker(h^{*d-1}+\cdots+h^*+\text{Id})
$$
while
$$
H^1(F)_1=\ker(h^*-\text{\rm Id}).
$$
Note that $\dim Gr_2^WH^1(F)=\dim H^1(M)=d-1$ depending only on the number of lines in the arrangement $\mathcal{A}$. For many known examples, $\dim W_1H^1(F)=\dim H^1(F)_{\neq1}$, which depends on the monodromy $h$, is very small: for instance, for Hesse arrangement, $\dim H^1(F)_{\neq 1}$ is 6 (see \cite{BDS}) and for the arrangement $\mathcal{A}(m,m,3)$ in \cite{Dpre1}, this number is at most 4. The interested reader may find more such examples in \cite{Dpre1} and \cite{CS95}.

To explore the deep reasons for the smallness of $\dim H^1(F)_{\neq 1}$ and also try to find more ball quotients, we consider the natural compactification of $F$ in $\bb{P}^3$, namely,
$$
\ov{F}\quad:\quad Q(x,y,z)+t^d=0.
$$
By resolving the singularities of $\ov{F}$, we obtain a smooth projective surface. Let $\pi: \widetilde{F}\to\ov{F}$ be the {\bf minimal resolution} of $\ov{F}$. We say that $\mathcal{A}$ is a \emph{pencil} if $V(Q)$ has a singularity of multiplicity $d=|\mathcal{A}|$.  We prove the following.

\begin{thm}\label{thm: not ball quotient}
Assume $d=|\mathcal{A}|\geq 2$. Then
\begin{enumerate}[$\rm (i)$]
\item if $d\geq 4$ and $\mathcal{A}$ is a pencil, $c_1^2(\widetilde{F})>3c_2(\widetilde{F})$.
\item if $d\geq3$ and $\mathcal{A}$ is not a pencil or $d=2$, $c_1^2(\widetilde{F})<3c_2(\widetilde{F})$.
\end{enumerate}

In particular, if $d=2$ or $d\geq 3$ and $\mathcal{A}$ is not a pencil, then $\widetilde{F}$ is not a ball quotient.
\end{thm}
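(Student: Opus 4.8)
The plan is to deduce the final statement from the Chern-number comparison in $\rm (i)$ and $\rm (ii)$ together with the equality case of the Miyaoka--Yau inequality \eqref{eq: MYineq}. Indeed, any ball quotient is a minimal surface of general type attaining equality $c_1^2=3c_2$; hence a smooth projective surface with $c_1^2\neq 3c_2$ cannot be a ball quotient. In the cases $d=2$, or $d\geq 3$ with $\mathcal A$ not a pencil, part $\rm (ii)$ supplies the \emph{strict} inequality $c_1^2(\widetilde F)<3c_2(\widetilde F)$, so $\widetilde F$ is not a ball quotient. Thus everything reduces to the two Chern-number inequalities, which is where the real work lies.

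First I would analyze $\ov F$ itself. Since $\ov F\subset\bb P^3$ is the degree-$d$ hypersurface $Q+t^d=0$, the projection $[x:y:z:t]\mapsto[x:y:z]$ realizes it as the $d$-fold cyclic cover of $\bb P^2$ branched along $V(Q)$, and a Jacobian check shows $\Sing(\ov F)$ lies in the hyperplane $t=0$ and maps bijectively onto $\Sing(V(Q))$. Near a point $p$ of multiplicity $m_p$ the local equation is $t^d+g_{m_p}(u,v)=0$, where $g_{m_p}$ is the product of the $m_p$ concurrent lines through $p$; in particular $\ov F$ is a normal Gorenstein surface with isolated singularities, so adjunction gives $\omega_{\ov F}=\mathcal{O}_{\ov F}(d-4)$ and $K_{\ov F}^2=(d-4)^2 d$. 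Writing $\pi:\widetilde F\to\ov F$ for the minimal resolution and $K_{\widetilde F}=\pi^*K_{\ov F}+\sum_p\Delta_p$ for the discrepancy decomposition (with $\Delta_p$ supported on the exceptional set $E_p$ over $p$, disjoint for distinct $p$), one gets $c_1^2(\widetilde F)=(d-4)^2 d+\sum_p\Delta_p^2$. For $c_2$ I would compute $\chi(\ov F)$ from the decomposition $\ov F=F\sqcup V(Q)$: the unramified degree-$d$ cover $\rho:F\to M$ gives $\chi(F)=d\,\chi(M)$ and inclusion--exclusion gives $\chi(V(Q))=2d-\sum_p(m_p-1)$, whence $\chi(\ov F)=5d-2d^2+(d-1)\sum_p(m_p-1)$ and $c_2(\widetilde F)=\chi(\ov F)+\sum_p(\chi(E_p)-1)$. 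Collecting the global parts, which combine to $d(d-1)^2$, yields
\begin{align*}
c_1^2(\widetilde F)-3c_2(\widetilde F) &= d(d-1)^2+\sum_p\nu(d,m_p),\\
\nu(d,m) &:= \Delta_p^2-3\bigl(\chi(E_p)-1\bigr)-3(d-1)(m-1),
\end{align*}
where $\nu(d,m)$ collects the local contribution of a point of multiplicity $m$ and depends only on $d$ and $m$.

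The technical heart --- and the step I expect to be the main obstacle --- is the explicit minimal resolution of the quasi-homogeneous singularity $t^d+g_m(u,v)=0$ and the extraction of $\Delta_p^2$ and $\chi(E_p)$ as functions of $d$ and $m$. For $m=2$ this is the rational double point $A_{d-1}$, which is crepant ($\Delta_p^2=0$) with $\chi(E_p)=d$, giving $\nu(d,2)=-6(d-1)$. For general $m$ the resolution is star-shaped: a central curve realized as the $\gcd(d,m)$-fold cyclic cover of $\bb P^1$ branched over the $m$ points cut out by the lines (its genus coming from Riemann--Hurwitz), together with Hirzebruch--Jung chains of rational curves attached at the intersection points, whose self-intersections and discrepancies are governed by the continued-fraction expansion attached to $d$ and $m$. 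The delicate points are that the discrepancies become negative --- the singularities are non-canonical, indeed not even log canonical once $m$ is close to $d$ --- and that genus and self-intersection data must be tracked simultaneously to evaluate $\Delta_p^2$.

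Finally I would assemble the two cases. For a pencil the unique singular point is the vertex of the cone over the smooth plane curve $C_0:\ Q(u,v)+t^d=0$ of degree $d$ and genus $\binom{d-1}{2}$; a single blow-up resolves it with $E_p\cong C_0$, $E_p^2=-d$ and discrepancy $2-d$, and a short computation gives $c_1^2-3c_2=2d(d-3)$, positive for $d\geq 4$ (this is $\rm (i)$) and equal to $-4<0$ for the two-line case $d=2$. For the non-pencil case every $m_p\leq d-1$, and I would establish the pointwise bound $\nu(d,m)\leq -6(d-1)\binom{m}{2}$ for $2\leq m\leq d-1$ (an inequality which, tellingly, reverses exactly at $m=d$, matching the pencil); summing against the fundamental combinatorial identity $\sum_p\binom{m_p}{2}=\binom{d}{2}$ then gives $c_1^2-3c_2\leq d(d-1)^2-6(d-1)\binom{d}{2}=-2d(d-1)^2<0$, with strictness because equality in the pointwise bound forces all points to be nodes, in which case the displayed value is already $-2d(d-1)^2<0$. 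This proves $\rm (ii)$, and hence the non-ball-quotient conclusion.
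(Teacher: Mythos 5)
Your reduction to the sign of $c_1^2-3c_2$, your decomposition $c_1^2(\widetilde F)-3c_2(\widetilde F)=d(d-1)^2+\sum_p\nu(d,m_p)$ with $\nu(d,m)=\Delta_p^2-3(\chi(E_p)-1)-3(d-1)(m-1)$, and your treatment of the pencil case (one blow-up of the cone over a smooth degree-$d$ plane curve, giving $c_1^2-3c_2=2d(d-3)$) all check out and agree with the paper. The fatal problem is the pointwise bound $\nu(d,m)\leq -6(d-1)\binom{m}{2}$ for $2\leq m\leq d-1$, which is false already for triple points: for $m=3$ and $3\mid d$ the resolution data give $\Delta_p^2=-d$ and $\chi(E_p)-1=d-4$, hence $\nu(d,3)=-d-3(d-4)-6(d-1)=-10d+18$, whereas your bound demands $\nu(d,3)\leq-18(d-1)$; at $d=9$ this reads $-72\leq-144$. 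The claimed consequence $c_1^2-3c_2\leq-2d(d-1)^2$ is likewise false: for a near-pencil ($t_{d-1}=1$, $t_2=d-1$, $d\geq4$) one computes $c_1^2-3c_2=-4d(d-1)$, which exceeds $-2d(d-1)^2$ as soon as $d\geq4$.

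The failure is not reparable by tuning the constant: writing $\mathcal{E}_{m,d}=-\nu(d,m)-(d-1)m(m-1)$, your bound is equivalent to $\mathcal{E}_{m,d}\geq 2(d-1)m(m-1)$, and any argument that only pairs a pointwise bound against $\sum_p\binom{m_p}{2}=\binom{d}{2}$ would need $\mathcal{E}_{m,d}\geq0$ for all admissible $m$ --- but $\mathcal{E}_{m,d}$ is genuinely negative for some intermediate multiplicities (e.g.\ $d=2m$ gives $\mathcal{E}_{m,d}=2m(5-m)<0$ for $m\geq6$). This is precisely why the paper, after establishing the uniform lower bound $\mathcal{E}_{r,d}>-2r(r-1)$ and the exact values $\mathcal{E}_{2,d}=4(d-1)$, $\mathcal{E}_{3,d}\geq4(d-3)$, must invoke a nontrivial combinatorial input, namely Hirzebruch's inequality $t_2+\frac34 t_3\geq d+\sum_{r\geq5}(r-4)t_r$ (so in particular $t_2+t_3\geq d$), to show that the positive contributions of double and triple points outweigh the possibly negative contributions of high-multiplicity points; it also isolates the cases $t_d\neq0$ and $t_{d-1}\neq0$ separately. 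Without some such global constraint on the $t_r$'s beyond $\sum_r t_r\binom{r}{2}=\binom{d}{2}$, part $\rm(ii)$ does not follow.
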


The non-ball-quotient property of $\widetilde{F}$ is always true for $d\geq 2$, see Remark \ref{rk: exceptional case} below. In fact, if $\mathcal{A}$ is not a pencil, $\widetilde{F}$ contains several rational curves according to Theorem \ref{thm: resolhom} below, thus it is not surprising that $\widetilde{F}$ is not a ball quotient because a ball quotient cannot contain any rational curve, see \cite{Cat}, Proposition 19.

However, our results above give much preciser numerical properties about the Chern numbers. In particular, $c_1^2(\widetilde{F})<3c_2(\widetilde{F})$ holds when $\mathcal{A}$ is not a pencil. So it is natural to discuss whether $\widetilde{F}$ is a surface of general type. We prove the following result in Section \ref{sec: general type}.

\begin{thm}\label{thm: general type}
Assume $d=|\mathcal{A}|\geq 7$ and $V(Q)$ contains only nodes or triple points as singularities, then $\widetilde{F}$ is of general type.
\end{thm}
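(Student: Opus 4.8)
The plan is to reduce the assertion to two numerical facts about $\widetilde{F}$: that its Kodaira dimension is nonnegative, and that $c_1^2(\widetilde{F})>0$. Granting these, let $S$ be the minimal model of $\widetilde{F}$. Since $\widetilde{F}\to S$ is a composition of blow-downs, each of which raises the self-intersection of the canonical class by one, $K_S^2=c_1^2(\widetilde{F})+(\text{number of contractions})\geq c_1^2(\widetilde{F})>0$. As $\kappa(\widetilde{F})=\kappa(S)\geq 0$ excludes $S$ being rational or ruled, the minimal surface $S$ has $K_S$ nef, and a nef class on a surface with positive self-intersection is big. Hence $\kappa(S)=2$ and $\widetilde{F}$ is of general type. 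So everything comes down to a positive lower bound for $c_1^2(\widetilde{F})$ and the nonvanishing of a plurigenus.

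First I would pin down the singularities of $\ov{F}$. They lie over the singular points of $V(Q)$ in the hyperplane $\{t=0\}$, and near a point where $k$ lines meet the surface is analytically $\ell_1\cdots\ell_k(x,y)+t^d=0$ after absorbing the nonvanishing product of the remaining lines into a $d$-th root of $t$. For a node ($k=2$) this is the rational double point $A_{d-1}$, so there $K_{\widetilde{F}}$ is crepant and no correction occurs; for a triple point ($k=3$) one obtains the isolated quasi-homogeneous singularity $g_3(x,y)+t^d=0$ of weights $(d,d,3)$, which is the genuinely new feature. Writing $K_{\widetilde{F}}=\pi^{*}K_{\ov{F}}+\Delta$ with $\Delta=\sum_{p}\Delta_p$ supported over the triple points, and using $\omega_{\ov{F}}\cong\mathcal{O}_{\ov{F}}(d-4)$ by adjunction together with $\pi^{*}K_{\ov{F}}\cdot E_i=0$, I get
\[
c_1^2(\widetilde{F})=K_{\ov{F}}^2+\Delta^2=d(d-4)^2+\sum_{p}\Delta_p^2 .
\]

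The crux is therefore the local invariant $\Delta_p^2=\sum_i a_i\,(K_{\widetilde{F}}\cdot E_i)$ of the triple-point singularity, where the $E_i$ are its exceptional curves on the minimal resolution and the $a_i\leq 0$ are the discrepancies. Because $\pi$ is minimal, every $E_i$ satisfies $E_i^2\leq -2$, so $K_{\widetilde{F}}\cdot E_i\geq 0$ and only the few curves of positive genus or self-intersection $\leq -3$ contribute; this already bounds the defect $-\Delta_p^2$. I would compute it by resolving $g_3(x,y)+t^d=0$ as the $d$-fold cyclic cover of $\mathbb{C}^2$ branched over three concurrent lines: blow up the base point to make the branch divisor SNC, pass to the normalized cover (whose central curve is a cyclic cover of $\mathbb{P}^1$ branched at the three points, with genus governed by $\gcd(d,3)$), and resolve the residual cyclic quotient singularities by Hirzebruch--Jung strings. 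This produces a star-shaped resolution graph and an explicit value of $-\Delta_p^2$ that is \emph{linear} in $d$. Feeding this into the displayed formula and using the combinatorial identity $n_2+3n_3=\binom{d}{2}$ (with $n_2,n_3$ the numbers of nodes and triple points) together with $n_3\leq\tfrac{1}{3}\binom{d}{2}$ gives a lower bound for $c_1^2(\widetilde{F})$ whose cubic main term $d(d-4)^2$ overtakes the subtracted quadratic-in-$d$ contribution; verifying that the crossover occurs exactly at $d=7$ is the delicate but elementary estimate.

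Finally, for $\kappa(\widetilde{F})\geq 0$ I would establish $p_g(\widetilde{F})>0$. Here $h^0(\ov{F},\omega_{\ov{F}})=h^0(\mathcal{O}_{\ov{F}}(d-4))=\binom{d-1}{3}$, and $p_g(\widetilde{F})$ equals this minus the sum of the local adjoint deficiencies (geometric genera) of the triple points; equivalently, once one records that $\widetilde{F}$ is regular (the vanishing $q=0$ being read off from $\ov{F}$ being a hypersurface with only isolated singularities, as in Theorem~\ref{thm: resolhom}), it suffices to check $\chi(\mathcal{O}_{\widetilde{F}})=\tfrac{1}{12}\bigl(c_1^2(\widetilde{F})+c_2(\widetilde{F})\bigr)\geq 2$ directly from the Chern-number formulas underlying Theorem~\ref{thm: not ball quotient}. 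The main obstacle throughout is the triple-point resolution: its combinatorics split according to $\gcd(d,3)$, and the theorem hinges on both the local defect $-\Delta_p^2$ and the local geometric genus growing only linearly in $d$, slowly enough that the degree-$d$ hypersurface geometry dominates for every admissible configuration as soon as $d\geq 7$.
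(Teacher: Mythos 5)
Your computational core coincides with the paper's: you write $K_{\widetilde{F}}=\pi^*K_{\ov{F}}+\Delta$, note that nodes give crepant $A_{d-1}$ points, reduce everything to the local defect of the weighted-homogeneous triple-point singularity $G_3(u,v)+t^d=0$ (resolved by a star-shaped graph whose central curve has genus governed by $\gcd(3,d)$), find that this defect is linear in $d$ (the paper gets $DCI_{3,d}\in\{-d,\,-(d-1),\,-(d-2)\}$ according to $d\bmod 3$), and then dominate it by $d(d-4)^2$ using $2t_2+6t_3=d(d-1)$, hence $t_3\le d(d-1)/6$. Where you diverge is the final step. The paper invokes the single criterion of Proposition \ref{prop: criterion general1}: a smooth projective surface with $c_1^2>9$ is of general type, because blowing down to the minimal model only increases $c_1^2$ and every non-general-type minimal surface has $c_1^2\le 9$; so the estimate on $c_1^2$ alone finishes the proof. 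You instead require only $c_1^2>0$ but must then separately establish $\kappa(\widetilde{F})\ge 0$ via $p_g>0$, an extra and genuinely nontrivial input (an adjoint-deficiency count at the triple points, or the computation of $c_2$ and $\chi(\mathcal{O}_{\widetilde{F}})$) that the paper's route avoids entirely. Since your own estimate already produces $c_1^2$ of order $d^3$, nothing is gained by weakening the threshold from $9$ to $0$, and the plurigenus step is pure overhead.

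One assertion in your plan is false and must be removed: $q(\widetilde{F})=0$ does \emph{not} follow from $\ov{F}$ being a hypersurface with only isolated singularities. By \cite{DP11} one has $2q(\widetilde{F})=\dim H^1(F)_{\neq 1}$, which is frequently positive even for arrangements with only nodes and triple points — e.g.\ $\mathcal{A}(3,3,3)$ has $d=9$, only triple points, and $q=2$ — and the nonvanishing of $q$ is in fact one of the central themes of the paper. The error happens to be harmless for your conclusion, since $\chi(\mathcal{O}_{\widetilde{F}})\ge 2$ already forces $p_g=\chi(\mathcal{O}_{\widetilde{F}})-1+q\ge 1$ with no regularity assumption, but as written that step is wrong and should be rephrased accordingly.
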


In fact, the Chern numbers of $\widetilde{F}$ are uniquely determined by the combinatorics of $\mathcal{A}$. Let $t_r$ be the number of singular points in $V(Q)$ of multiplicity $r$. Then we have the following.

\begin{thm}\label{thm: chern}
Let $\mathcal{A}$ be a line arrangement in $\bb{P}^2$ consisting of $d=|\mathcal{A}|$ lines. Then
\begin{enumerate}[\rm (i)]
\item the first Chern number of the associated surface $\widetilde{F}$ is given by
$$
c_1^2(\widetilde{F})=K_{\ov{F}}^2+\sum_rt_r DCI_{r,d}
$$
where $K_{\ov{F}}^2=d(d-4)^2$ and for $d\not\equiv1\Mod r$, we have
$$
DCI_{r,d}=-d(r-2)^2-r\sum_{i=1}^\lambda(n_i-2)+2(r-2)(r-\gcd(r,d))+(r-b);
$$
for $d\equiv1\Mod r$, we have
$$
DCI_{r,d}=-(d-1)(r-2)^2.
$$

\item the second Chern number of the associated surface$\widetilde{F}$ is given by
$$
c_2(\widetilde{F})=\chi(\ov{F})+\sum_rt_r DCII_{r,d}
$$
where $\chi(\ov{F})$ is the topological Euler number of $\ov{F}$
$$
\chi(\ov{F})=d(d^2-4d+6)-(d-1)\sum_rt_r(r-1)^2.
$$
In addition, for $d\not\equiv1\Mod r$, we have
$$
DCII_{r,d}=1+r\lambda-(r-2)(\gcd(r,d)-1);
$$
for $d\equiv1\Mod r$, we have
$$
DCII_{r,d}=d-1.
$$
\end{enumerate}
In the above formulae, the numbers $\lambda,b,n_i$'s are uniquely determined by $r$ and $d$ only  from Theorem \ref{thm: resolhom} below.
\end{thm}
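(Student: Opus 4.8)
The plan is to reduce everything to a local computation at the singular points of $\ov F$ and then to feed in the explicit resolution supplied by Theorem~\ref{thm: resolhom}. First I would locate $\Sing\ov F$: since $\nabla(Q+t^d)=(\p_xQ,\p_yQ,\p_zQ,dt^{d-1})$, for $d\geq2$ a singular point forces $t=0$ together with $\nabla Q=Q=0$, so $\Sing\ov F$ lies in the plane $\{t=0\}$ and coincides with $\Sing V(Q)$, the multiple points of $\mathcal A$. Near a point of multiplicity $r$ the germ of $\ov F$ is $\prod_{i=1}^r\ell_i(x,y)+t^d=0$ with the $\ell_i$ pairwise non-proportional, and this is exactly the germ whose minimal resolution Theorem~\ref{thm: resolhom} describes: it records the exceptional fibre $E_p=\pi^{-1}(p)$, the self-intersections $-n_i$ and genera of its components, and the discrepancies, all packaged into the numbers $\lambda,b,n_i$ and $\gcd(r,d)$.

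Next I would pin down the two global invariants of the singular surface $\ov F$. Being a degree-$d$ hypersurface in $\bb P^3$, $\ov F$ is Gorenstein with dualizing sheaf $\omega_{\ov F}=\C O_{\ov F}(d-4)$; as the self-intersection of a Cartier divisor this gives $K_{\ov F}^2=(d-4)^2\deg\ov F=d(d-4)^2$. For the topological Euler number I would compare $\ov F$ with a smooth surface $S_d$ of the same degree: $\ov F$ acquires only isolated singularities at the multiple points, and the Thom--Sebastiani formula gives Milnor number $(r-1)^2(d-1)$ at a point of multiplicity $r$ (the plane germ $\prod\ell_i$ has $\mu=(r-1)^2$, the suspension by $t^d$ multiplies by $d-1$), whence $\chi(\ov F)=\chi(S_d)-\sum_p\mu_p=d(d^2-4d+6)-(d-1)\sum_rt_r(r-1)^2$. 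As a cross-check, the decomposition $\ov F=F\sqcup V(Q)$ together with the unramified degree-$d$ cover $\rho:F\to M$ gives $\chi(F)=d\,\chi(M)$ and recovers the same value using the identity $\sum_rt_r\binom r2=\binom d2$.

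With these in hand I would relate the Chern numbers of $\widetilde F$ to those of $\ov F$ plus purely local corrections. Writing $K_{\widetilde F}=\pi^*K_{\ov F}+\sum_p\sum_i a_iE_i$ with the integral discrepancies $a_i$ from Theorem~\ref{thm: resolhom}, and using $(\pi^*K_{\ov F})\cdot E_i=0$ and $(\pi^*K_{\ov F})^2=K_{\ov F}^2$ (valid since $K_{\ov F}$ is Cartier), I obtain $c_1^2(\widetilde F)=K_{\ov F}^2+\sum_p(\sum_i a_iE_i)^2$, so that $DCI_{r,d}:=(\sum_i a_iE_i)^2$ is the contribution of a multiplicity-$r$ point. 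Likewise, since $\pi$ is an isomorphism off $\Sing\ov F$ with fibre $E_p$ over each singular point, additivity of $\chi$ yields $c_2(\widetilde F)=\chi(\widetilde F)=\chi(\ov F)+\sum_p(\chi(E_p)-1)$, so $DCII_{r,d}:=\chi(E_p)-1$. Regrouping $\sum_p$ by multiplicity then produces the $\sum_rt_r$ of the statement.

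It remains to evaluate these two local quantities from the resolution graph. The term $DCII_{r,d}=\chi(E_p)-1$ is combinatorial: reading off the number of components, their genera, and the cycles of the dual graph in Theorem~\ref{thm: resolhom} yields $1+r\lambda-(r-2)(\gcd(r,d)-1)$ when $d\not\equiv1\pmod r$ and $d-1$ when $d\equiv1\pmod r$. For $DCI_{r,d}$ I would first solve the adjunction linear system $(\sum_i a_iE_i)\cdot E_j=2g_j-2-E_j^2$ for the discrepancies in terms of the self-intersections $n_i$, then evaluate the negative-definite quadratic form $(\sum_i a_iE_i)^2$, which unwinds to $-d(r-2)^2-r\sum_{i=1}^\lambda(n_i-2)+2(r-2)(r-\gcd(r,d))+(r-b)$, respectively $-(d-1)(r-2)^2$; the congruence dichotomy merely reflects the two possible shapes of the resolution graph. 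The main obstacle is precisely this last step: extracting $(\sum_i a_iE_i)^2$ and $\chi(E_p)$ in closed form from the continued-fraction-type resolution data of $\prod\ell_i+t^d$, keeping track of the genus of the central curve, and verifying that the two congruence cases assemble into the uniform formulae above. Everything preceding it is formal once Theorem~\ref{thm: resolhom} is granted.
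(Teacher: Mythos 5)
Your outline reproduces the paper's proof structure almost exactly: the reduction to local contributions $DCI_{r,d}=\bigl(\sum_i a_iE_i\bigr)^2$ and $DCII_{r,d}=\chi(\pi^{-1}(p))-1$ at each multiple point, the adjunction computation $K_{\ov F}^2=d(d-4)^2$, the vanishing $(\pi^*K_{\ov F})\cdot E_i=0$ and the pull-back formula, and the appeal to Theorem~\ref{thm: resolhom} for the resolution data are precisely what the paper does in Sections 3--5. The one point where you genuinely diverge is $\chi(\ov F)$: you smooth the hypersurface and subtract the Thom--Sebastiani Milnor numbers $(r-1)^2(d-1)$, whereas the paper uses the degree-$d$ covering $p:\ov F\to\bb P^2$ branched over $V(Q)$ --- which is exactly your ``cross-check,'' so the two computations corroborate each other and either suffices. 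The substantive issue is that the heart of the theorem, the closed-form evaluation of $DCI_{r,d}$ in the case $d\not\equiv1\Mod r$, is only asserted in your proposal (``which unwinds to\dots''), and you correctly flag it as the main obstacle. This is where the paper spends all of Section~\ref{sec: general invariants}: the adjunction system for the discrepancies is a $(\lambda+1)\times(\lambda+1)$ tridiagonal system whose solution is not transparent, and the paper's device is to shift to $a_i^*=a_i+1$, encode the recursion by the continued-fraction matrices $\bg{G_i}$, and prove Proposition~\ref{prop: entryG} identifying the second column of $\bg G=\bg{G_1\cdots G_\lambda}$ as $(b'-\alpha,\ (1+b'\beta)/\alpha-\beta)^T$; this is what yields $a_0=(2-r)\alpha+b'-1$, $a_1=(2-r)\beta+(1+b'\beta)/\alpha-1$, $a_\lambda=-(r-2)$ and hence, via \eqref{eq: dc1a0a1aq}, the stated formula with the terms $2(r-2)(r-\gcd(r,d))$ and $(r-b)$. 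Without that identity (or an equivalent closed form for the inverse of the intersection matrix) the uniform formula does not fall out of the dual graph, so your plan is sound and the asserted formulas are all correct, but the decisive computation remains to be supplied.
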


One of motivation of our work is to understand whether the Hodge numbers of $\widetilde{F}$ are combinatorially determined, one of the main open question in the theory of line arrangements, see \cite{PS14}. As is explained in Section \ref{sec: chern and hodge}, we have the following formulae
\begin{equation*}
\begin{cases}
h^{0,0}&=h^{2,2}=1,\\
h^{0,1}&=h^{1,0}=h^{1,2}=h^{2,1}=q,\\
h^{0,2}&=h^{2,0}=\frac{1}{12}(c_1^2+c_2)-(1-q),\\
h^{1,1}&=-\frac{1}{6}c_1^2+\frac{5}{6}c_2+2q.
\end{cases}
\end{equation*}
where $h^{p,q}$'s denote the Hodge numbers for a given smooth projective surface, $c_1^2, c_2$ denote the Chern numbers and $q$ the irregularity. For the associated surface $\widetilde{F}$, the Chern numbers $c_1^2, c_2$ are determined by the combinatorics of $\mathcal{A}$ by Theorem \ref{thm: chern}. On the other hand, it follows from \cite{DP11} that $2q=\dim H^1(F)_{\neq 1}$, which is known to many line arrangements, see \cite{CS95}. In fact, in \cite{PS14}, a combinatorial formula for $q$ is given when $\mathcal{A}$ has only double or triple points; more examples are given in \cite{S1} where $q$ is computed. \cite{S4} is a good and recent survey on the monodromy computations and in a recent preprint \cite{DStgenMF}, an effective algorithm to compute $q$ is provided.

To illustrate more numerical properties of the surface $\widetilde{F}$, we will give some examples in which we compute all the Hodge numbers of $\widetilde{F}$ in the end of this paper. We also compute the Chern ratio. This numerical invariant is of special interest for algebraic surfaces, see for instance \cite{MP},\cite{MPP},\cite{Naie},\cite{Rou}.

Note that in \cite{HI}, some combinatorial inequalities about the number of multiple points of line arrangement $\mathcal{A}$ were obtained by applying the Miyaoka-Yau inequality; these inequalities also play a key role in our proof of Theorem \ref{thm: not ball quotient}.

In our situation, it still remains a question on how to decide whether $\widetilde{F}$ is of general type in a more general situation; it also remain open whether $\widetilde{F}$ is a minimal surface, i.e., whether $\widetilde{F}$ contains $(-1)$-curves that are not contracted by $\pi$. All these issues will be addressed in subsequent papers.

\section{General setting}
In this section, we first present some basic facts about normal (not necessarily smooth) surfaces that will be used in the sequel. Although none of them is new, it is hard to find a good reference. Then we focus on surfaces associated to line arrangements.

\subsection{Intersection theory for normal surfaces}
Let $X$ be a projective variety of dimension $n$ over $\bb{C}$. When $X$ is smooth, then the intersection theory on $X$ is classical and quite well-known. But if $X$ is not smooth, there are technical problems about defining the intersection number of two divisors on $X$, see for instance \cite{EH}, Section 5.5 and also \cite{Fu}. However, we can always have a well-defined intersection number of $n$ {\bf Cartier divisors} on the projective variety $X$ and such an intersection theory admits similar properties as in the smooth case, see \cite{Deb}, Chapter 3.
Notation: the intersection number of $n$ Cartier divisors $D_1,\cdots, D_n$ will be denoted by $D_1\cdot\cdots\cdot D_n$; if $D_1=\cdots=D_n=D$, the intersection number is denoted by $D^n$.

In addition, given a morphism $f: C\to X$ from projective curve to a quasi-projective variety, and $D$ a Cartier divisor (class) on $X$, we define
\begin{equation}\label{eq: curveCartier}
D\cdot C=\deg(f^*D).
\end{equation}
We will mainly be concerned with intersection theory on normal surfaces.

Let $X,\widetilde{X}$ be two normal projective surfaces and $\pi:\widetilde{X}\to X$ a proper surjective morphism. Then we have the {\bf pull-back formula}
\begin{equation}\label{eq: pullback}
\pi^*C\cdot\pi^*D=C\cdot D
\end{equation}
for any two Cartier divisors on $X$, see \cite{Deb}, Proposition 3.16. Moreover, the pull-back formula \eqref{eq: pullback} together with Formula \eqref{eq: curveCartier} implies the {\bf projection formula}:
\begin{equation}\label{eq: projection formula}
\pi^*D\cdot E=D\cdot\pi_*E,
\end{equation}
where $D$ is a Cartier divisor on $X$ while $E$ is a curve on $\widetilde{X}$. In particular, if $E$ is contracted by $\pi$ to a point, then $\pi^*D\cdot E=0$ for any Cartier divisor $D$ on $X$.

\subsection{Canonical divisors of normal surfaces}
In the sequel of this section, let $X$ be a normal surface on a smooth projective threefold $Y$. If $X$ is smooth, then we have a well-defined canonical bundle and hence the canonical divisor $K_X$; moreover, we have the {\bf adjunction formula}
\begin{equation}\label{eq: X-adjunction}
K_X=(K_Y+X)|_X.
\end{equation}
When $X$ is not smooth, then we have a canonical bundle on the smooth locus $X\setminus\text{\rm Sing}(X)$ of $X$, and hence the associated Cartier divisor $K_{X\setminus\text{\rm Sing}(X)}$ on $X\setminus\text{\rm Sing}(X)$. The canonical divisor of $X$, still denoted by $K_X$, is the closure in $X$ is $K_{X\setminus\text{\rm Sing}(X)}$ and $K_X$ is a Weil divisor on $X$. Furthermore, the adjunction formula \eqref{eq: X-adjunction} still holds. Indeed, the equality clearly holds on the smooth locus $X\setminus\text{\rm Sing}(X)$; but $\text{\rm Sing}(X)$ has codimension 2 in $X$ since $X$ is normal, it follows that any Weil divisor on $X$ is uniquely determined by its restriction on $X\setminus\text{\rm Sing}(X)$, and thus Formula \eqref{eq: X-adjunction} is valid on $X$.

Note that $Y$ being smooth, it follows that $K_Y, X$ are both Cartier divisors on $Y$, hence $K_X=(K_Y+X)|_X$ is a Cartier divisor on $X$, and we have the intersection number $K_X^2$. By \cite{Deb}, Proposition 3.15, we obtain
\begin{equation}\label{eq: KX2}
K_X^2=(K_Y+X)\cdot(K_Y+X)\cdot X.
\end{equation}

\subsection{Miyaoka-Yau number}
 When $X$ is smooth, then $c_1^2(X)=K_X^2$ and $c_2(X)=\chi(X)$, the topological Euler number. If $X$ is not smooth, $K_X^2$ and $\chi(X)$ are still well-defined numbers for $X$. In view of the Miyaoka-Yau inequality, we give the following definition.
\begin{definition}
Let $X\subseteq Y$ be a normal surface on a smooth projective threefold $Y$, the \emph{Miyaoka--Yau number} of $X$ is defined by
$$
MY(X)=3\chi(X)-K_X^2.
$$
\end{definition}

Let the normal surface $X$ have an isolated singularity $0\in X$, and $\pi:\widetilde{X}\to X$ be a minimal resolution of the singularity 0, given by successive embedded blowups. Let $\pi':\widetilde{Y}\to Y$ be the effect of the successive blowups on $Y$. Then $\widetilde{Y}$ is a smooth projective threefold, on which $\widetilde{X}$ is a normal surface, hence we have the canonical divisor $K_{\widetilde{X}}=(K_{\widetilde{Y}}+\widetilde{X})|_{\widetilde{X}}$ and the Miyaoka-Yau number of $\widetilde{X}$:
$$
MY(\widetilde{X})=3\,\chi(\widetilde{X})-K_{\widetilde{X}}^2.
$$

\begin{definition}
Three numerical invariant differences for the minimal resolution $\pi:\widetilde{X}\to X$ are defined as follows:
\begin{enumerate}[(i)]
\item The difference for the first Chern number is
$$
DCI=K_{\widetilde{X}}^2-K_X^2;
$$
\item The difference for the second Chern number is
$$
DCII=\chi(\widetilde{X})-\chi(X);
$$
\item The difference for the Miyaoka-Yau number is
$$
DMY=MY(\widetilde{X})-MY(X)=3\,DCII-DCI.
$$
\end{enumerate}
\end{definition}

When $X$ is given by $(X,0): G_r(u,v)+t^d=0$ around the local coordinates $(u,v,t)$ centered at 0 on $Y$ where $G_r$ is a product of $r$ distinct linear forms, it turns out that the three differences defined above are determined by $r$ and $d$, and thus will be denoted by $DCI_{r,d}, DCII_{r,d}$ and
$
DMY_{r,d}=3\,DCII_{r,d}-DCI_{r,d}.
$

\section{Surfaces associated to line arrangements}

Let $\mathcal{A}=\{L_1,\cdots, L_d\}$ with $L_i:\ell_i=0, i=1,\cdots, d$, be a line arrangement in $\bb{P}^2$ with defining polynomial $Q(x,y,z)=\ell_1\ell_2\cdots\ell_d$.

Given $r\geq 2$. If a point $x\in\bb{P}^2$ lies on exactly $r$ lines in $\mathcal{A}$, or equivalently, $x$ is a singular point of multiplicity $r$ of the curve $V(Q): Q=0$ in $\bb{P}^2$, we say that $x$ is of multiplicity $r$.
The number of points of multiplicity $r$ will be denoted by $t_r$.

Consider the affine Milnor fiber $F: Q=1$ in $\bb{C}^3$, for which we have a natural compactification
$$
\ov{F}\quad:\quad Q(x,y,z)+t^d=0
$$
in $\bb{P}^3$. $\ov{F}$ is a singular normal surface in $\bb{P}^3$; a singular point of multiplicity $r$ of $V(Q)$ gives a singular point of multiplicity $r$ of $\ov{F}$, and vice versa. Moreover, since $Q$ is a product of distinct linear forms, around a singular point of $\ov{F}$ of multiplicity $r$, we have $\ov{F}: G_r(u,v)+t^d=0$ with $G_r(u,v)$ a product of $r$ distinct linear forms, whose resolution will be detailed investigated in next section.

For later convenience, we first compute the Chern numbers and Miyaoka--Yau number of the singular surface $\ov{F}$.

\begin{ex}\label{ex: k1fbar}
The adjunction formula \eqref{eq: X-adjunction} gives
$$
K_{\ov{F}}=(K_{\bb{P}^3}+\ov{F})|_{\ov{F}}\sim(d-4)H|_{\ov{F}},
$$
where $H$ is a hyperplane section of $\bb{P}^3$. Indeed, we have $\ov{F}\sim dH$ and $K_{\bb{P}^3}\sim -4H$ (where $\sim$ denotes rational equivalence). Therefore, using Formula \eqref{eq: KX2}, we have
\begin{equation}\label{eq: c1fbar}
K_{\ov{F}}^2=d(d-4)^2.
\end{equation}

Moreover, there is a natural projection
$$
p:\ov{F}\to\bb{P}^2,\quad (x,y,z,t)\mapsto(x,y,z),
$$
which is a branched covering of degree $d$ with ramification locus $V(Q)\subseteq\bb{P}^2$, hence
$$
\chi(\ov{F})=3d-(d-1)\chi(V(Q)).
$$
The Euler characteristic number of the singular curve $V(Q)$ is 
\begin{eqnarray*}
\chi(V(Q))&=&d(3-d)+\sum_r t_r(r-1)^2,\\
\end{eqnarray*}
which implies that
\begin{eqnarray}
\chi(\ov{F})&=&d(d^2-4d+6)-(d-1)\sum_r t_r(r-1)^2\label{eq: c2fbar}
\end{eqnarray}
Consequently,
\begin{eqnarray}
MY(\ov{F})&=&3\,\chi(\ov{F})-K_{\ov{F}}^2
          =(d-1)\sum_rt_r(r-1)(3-r).\label{eq: myfbar}\qe
\end{eqnarray}
\end{ex}

\begin{rk}\label{rk: d(d-1)2}
To deduce \eqref{eq: myfbar}, we have used the following well-known equality
$$
\frac{d(d-1)}{2}=\sum_r t_r\frac{r(r-1)}{2}.
$$
\end{rk}

Let $\pi:\widetilde{F}\to \ov{F}$ be a minimal resolution of $\ov{F}$, namely, the following three conditions hold:
\begin{enumerate}[(i)]
\item $\widetilde{F}$ is a smooth surface and $\pi$ is proper birational morphism;
\item $\pi:\widetilde{F}\setminus\pi^{-1}(\Sing(\ov{F}))\to\ov{F}\setminus\Sing(\ov{F})$ is an isomorphism;
\item there is no exceptional $(-1)$-curves on $\widetilde{F}$, i.e., a rational curve $E$ on $\widetilde{F}$ such that $E^2=-1$ and $E$ is contracted to a point by $\pi$.
\end{enumerate}
Such a resolution $\pi$ can be obtained by successive embedded blowups, namely by blowing up along submanifolds of $\bb{P}^3$ as well as the resulting manifolds in each step. Note that $K_{\widetilde{F}}$ is a Cartier divisor since $\widetilde{F}$ is smooth.

\subsection{Determination of the canonical divisor}

Let $p_1,\cdots, p_s$ be all the singular points of $\ov{F}$ and $r_i$ be the multiplicity of $p_i$. Let $E_{i,1},\cdots, E_{i,v_i}$ be the irreducible components of $\pi^{-1}(p_i)$ and $M_{i,j,k}=E_{i,j}\cdot E_{i,k}$ be the intersection product of $E_{i,j}$ and $E_{i,k}$. Let moreover,
$$
\bg{M_i}=(M_{i,j,k})
$$
be the intersection matrix of $E_{i,j}$'s for any fixed $i$. It is a $v_i\times v_i$ matrix. Set
$$
\bg{E_i}=(E_{i,1},E_{i,2}\cdots, E_{i,v_i})
$$
as a $1\times v_i$ matrix.

Then the canonical divisor $K_{\widetilde{F}}$ is of the following form
$$
K_{\widetilde{F}}=\pi^*K_{\ov{F}}+\sum_{i=1}^s\sum_{j=1}^{v_i}a_{i,j}E_{i,j}.
$$
Let
$$
\bg{A_i}=(a_{i,1},a_{i,2},\cdots, a_{i,v_i})^T
$$
be a $v_i\times 1$ matrix, where $(\quad )^T$ denotes the transpose of a matrix, then $K_{\widetilde{F}}$ can be written as
\begin{eqnarray}
K_{\widetilde{F}}&=&\pi^*K_{\ov{F}}+\sum_{i=1}^s\bg{E_iA_i}.\label{eq: canA}
\end{eqnarray}
Let
$$
\bg{E_{i}}\cdot K_{\widetilde{F}}=(E_{i,1}\cdot K_{\widetilde{F}},\cdots, E_{i,v_i}\cdot K_{\widetilde{F}}),
$$
be a $1\times v_i$ matrix. By Theorem \ref{thm: resolhom} below, each $E_{i,j}$ is a smooth complete curve, and by \cite{MD1}, each $\bg{M_i}$ is a symmetric, negative definite $v_i\times v_i$ matrix. Set $\bg{N_i}=-\bg{M_i}^{-1}$.

Taking intersection product of $K_{\widetilde{F}}$ with the exceptional divisors $E_{i,j}$'s, it follows that
$$
\bg{A_i}=-\bg{N_i(E_i}\cdot K_{\widetilde{F}})^T,
$$
and hence by equality \eqref{eq: canA}, we have
\begin{equation}
\label{eq: cangood}
K_{\widetilde{F}}=\pi^*K_{\ov{F}}-\sum_{i=1}^s\bg{E_iN_i(E_i}\cdot K_{\widetilde{F}})^T.
\end{equation}
Moreover, it follows from the adjunction formula
\begin{equation}
\label{eq: ad-curve}
E_{i,j}\cdot K_{\widetilde{F}}=2g(E_{i,j})-2-E_{i,j}^2
\end{equation}
that $K_{\widetilde{F}}$ is uniquely determined once we know the genera $g(E_{i,j})$'s and the intersection matrices $\bg{M_i}$'s.

Finally, by the pull-back formula \eqref{eq: pullback} and the projection formula \eqref{eq: projection formula}, we have, by Formula \eqref{eq: cangood}, that
\begin{eqnarray*}
K_{\widetilde{F}}^2-K_{\ov{F}}^2&=&\sum_{i=1}^s\biggl(\bg{E_iN_i(E_i}\cdot K_{\widetilde{F}})^T\biggr)^2.
\end{eqnarray*}
Observe that the term $\bg{E_iN_i(E_i}\cdot K_{\widetilde{F}})^T$ involves only the resolution of the point $p_i$, hence the above formula motivates us to study the resolution of only one singularity of a normal surface or more specifically, resolution of a normal surface germ.

\subsection{Miyaoka--Yau number}
 Recall that $p_i$ is a singular point of $\ov{F}$ of multiplicity $r_i$. By definition, we have, with the above notations,
$$
DCI_{r_i,d}=\biggl(\bg{E_iN_i(E_i}\cdot K_{\widetilde{F}})^T\biggr)^2,
$$
and hence
$$
K_{\widetilde{F}}^2-K_{\ov{F}}^2=\sum_{i=1}^s DCI_{r_i,d}=\sum_r t_r DCI_{r,d}.
$$
Similarly, for the Euler number, we have
$$
\chi(\widetilde{F})-\chi(\ov{F})=\sum_{i=1}^s DCII_{r_i,d}=\sum_rt_r DCII_{r,d},
$$
therefore, it follows from \eqref{eq: myfbar} that
\begin{eqnarray}
MY(\widetilde{F})&=&MY(\ov{F})+\sum t_r DMY_{r,d}\nonumber\\
                 &=&\sum_rt_r((d-1)(r-1)(3-r)+DMY_{r,d}).\label{eq: MY}
\end{eqnarray}
For later convenience, we set
$$
\mathcal{E}_{r,d}=(d-1)(r-1)(3-r)+DMY_{r,d}.
$$

\section{Resolution of singularities}

We consider singularities of the type $(X,0): f(u,v,t)=0$ with $f(u,v,t)=G_r(u,v)+t^d$, where $G_r(u,v)$ is a product of $r$ distinct linear forms in $u,v$. Such a type of singularity in fact belongs to a special class of singularities, namely weighted homogeneous singularities, whose resolutions are explicitly known.

\subsection{Weighted homogenous singularities}

Consider the $\bb{C}^*$ action on $\bb{C}^3$ given by
$$
a\cdot(z_1,z_2,z_3)=(a^{w_1}z_1,a^{w_2}z_2,a^{w_3}z_3),
$$
where the weights $w_i=\text{weight}(z_i)$ are strictly positive integers satisfying
$$
\gcd(w_1,w_2,w_3)=1.
 $$
An isolated surface singularity $(X',0): f'(z_1,z_2,z_3)=0$ is called \emph{weighted homogeneous} of degree $N$ for the weights $w_i$ if
$$
a\cdot f'(z_1,z_2,z_3)=f'(a^{w_1}z_1,a^{w_2}z_2,a^{w_3}z_3)=a^N f'(z_1,z_2,z_3),\qquad\forall a\in\bb{C}^*.
$$

\begin{thm}[see \cite{OW} and also \cite{D92}, Section 4.10]\label{thm: resolhom}
Let $(X,0): f(u,v,t)=G_r(u,v)+t^d=0, r\leq d$ be an isolated weighted homogeneous singularity of degree $N=rd/\gcd(r,d)$, where $G_r(u,v)$ is a product of $r$ distinct linear forms in $u,v$, for the weights
$$
\begin{cases}
w_1&=\text{\rm weight}(u)=d/\gcd(r,d),\\
w_2&=\text{\rm weight}(v)=d/\gcd(r,d),\\
w_3&=\text{\rm weight}(t)=r/\gcd(r,d).
\end{cases}
$$
Then there is a resolution $\pi:\widetilde{X}\to X$ such that:
\begin{enumerate}[(i)]
\item there is a $\bb{C}^*$ action on $\widetilde{X}$ under which the morphism $\pi$ is equivariant.
\item the exceptional divisor $\pi^{-1}(0)$ has exactly one component, denoted by $E_0$, which is fixed pointwise by the $\bb{C}^*$ action on $\widetilde{X}$.
\item $\pi^{-1}(0)$ has the following form
$$
\pi^{-1}(0)=E_0\cup E_1\cup\cdots\cup E_\lambda,
$$
where for $k=1,\cdots,\lambda$,
$$
E_k=E_k^1\cup\cdots\cup E_k^r
$$
is a disjoint union of $r$ curves, corresponding to vertices at distance $k$ from the center in the dual graph below.
\item For each $k=1,\cdots,\lambda$ and $j=1,\cdots,r$, the curve $E_k^j$ is a smooth rational irreducible curve and has self-intersection $(E_k^j)^2=-n_k\leq -2$ (independent of $j$).
\item $E_0$ is a smooth complete curve of genus
\begin{eqnarray}
g(E_0)&=&\frac{1}{2}\biggl[\frac{N^2}{w_1w_2w_3}-\sum_{i<j}\frac{N\gcd(w_i,w_j)}{w_iw_j}+\sum_i\frac{\gcd(N,w_i)}{w_i}-1\biggr]\nonumber\\
    &=&\frac{1}{2}(r-2)(\gcd(r,d)-1).\label{eq: genusEc}
\end{eqnarray}
\item The components $E_0, E_k^j$'s meet transversally according to the following star-shaped graph
\begin{center}{\setlength{\unitlength}{0.5mm}
\begin{picture}(140,55)
\put(70,40){\circle*{5}}
\put(70,40){\line(1,0){15}}
\put(70,40){\line(-1,0){15}}
\put(70,40){\line(1,-1){13}}
\put(70,40){\line(-1,-1){13}}

\put(84,25){\circle*{5}}
\put(89,13){$\ddots$}
\put(56,25){\circle*{5}}
\put(43,13){$\iddots$}

\put(85,40){\circle*{5}}
\put(87,40){\line(1,0){15}}
\put(55,40){\circle*{5}}
\put(52,40){\line(-1,0){15}}

\put(100,40){\circle*{5}}
\put(106,38){$\cdots$}
\put(35,40){\circle*{5}}
\put(21,38){$\cdots$}

\put(119,40){\circle*{5}}
\put(121,40){\line(1,0){15}}
\put(135,40){\circle*{5}}
\put(17,40){\circle*{5}}
\put(15,40){\line(-1,0){15}}
\put(2,40){\circle*{5}}

\put(68,45){$b$}

\put(80,45){$n_1$}
\put(51,45){$n_1$}

\put(95,45){$n_2$}
\put(32,45){$n_2$}

\put(114,33){$n_{\lambda-1}$}
\put(133,33){$n_\lambda$}
\put(13,33){$n_{\lambda-1}$}
\put(-2,33){$n_\lambda$}

\end{picture}
}
\end{center}
where the central vertex corresponds to $E_0$ and there are exactly $r$ arms, which have the same length $\lambda$ and the same weight sequences $n_1,\cdots, n_\lambda$.
\item Moreover, the above dual graph satisfies the following: if we index the arms $1,2,\cdots, r$ from leftmost to right by the anticlockwise order and go along the arm indexed by $j$ from the end closest to $E_0$ to the one farthest to $E_0$, we get, in order, the vertices corresponding to the curves $E_1^j, E_2^j,\cdots, E_\lambda^j$.
\item Let $\alpha=w_1=d/\gcd(r,d)$ and $b'=w_3=r/\gcd(r,d)$. When $\alpha=1$, then there are in fact no arms, i.e., $\lambda=0$ and in this case, let $\beta=0$. When $\alpha>1$, choose $0<\beta<\alpha$ such that $\beta b' \equiv -1\Mod\alpha$. Then the weights of the vertices of the dual graph are determined as follows:
    \begin{itemize}
    \item The weight of the central vertex is
    $$
    b=\frac{N}{w_1w_2w_3}+r\beta/\alpha=\frac{\gcd(r,d)(1+b'\beta)}{\alpha}.
    $$
    \item The weight sequence $(n_1,\cdots, n_\lambda)$ along each arm is given by the following continued fraction decomposition
    $$
    \frac{\alpha}{\beta}=n_1-\frac{1}{n_2-\frac{1}{\cdots-\frac{1}{n_\lambda}}}.
    $$
    \end{itemize}
\end{enumerate}
\end{thm}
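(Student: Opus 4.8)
\emph{Proof proposal.} The plan is to exploit that $(X,0)$ carries a good $\bb{C}^*$-action and to realize the Orlik--Wagreich equivariant resolution (\cite{OW}, \cite{D92}) concretely by a single weighted blow-up followed by the resolution of finitely many cyclic quotient singularities. Write $g=\gcd(r,d)$, so that the weights are $w_1=w_2=\alpha:=d/g$ and $w_3=b':=r/g$ with $\gcd(\alpha,b')=1$, and $N=rd/g$. First I would perform the weighted blow-up of $(\bb{C}^3,0)$ with weights $(w_1,w_2,w_3)$. Because these agree with the weights of the action, the blow-up is $\bb{C}^*$-equivariant, its exceptional divisor is the weighted projective plane $\bb{P}(w_1,w_2,w_3)$, and this divisor is fixed pointwise by the residual $\bb{C}^*$; this yields (i) and (ii) at once. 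The central curve $E_0$ is then identified with the intersection of the strict transform $\widetilde{X}$ with the exceptional divisor, namely the weighted-homogeneous curve $\{G_r(u,v)+t^d=0\}\subseteq\bb{P}(w_1,w_2,w_3)$.

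With $E_0$ so described, its genus is the genus of that weighted-homogeneous curve, given by the Milnor--Orlik formula displayed in (v). I would then just substitute $w_1=w_2=d/g$, $w_3=r/g$, $N=rd/g$: one computes $N^2/(w_1w_2w_3)=rg$, $\sum_{i<j}N\gcd(w_i,w_j)/(w_iw_j)=r+2g$ and $\sum_i\gcd(N,w_i)/w_i=3$, so the bracket collapses to $(r-2)(g-1)$, proving (v). This step is routine arithmetic.

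Next I would locate the remaining singularities of $\widetilde{X}$. Since $X$ has an isolated singularity at $0$, every singularity of $\widetilde{X}$ lies on $E_0$ and comes from an ambient singularity of $\bb{P}(w_1,w_2,w_3)$ crossed by $E_0$. As $\gcd(w_1,w_3)=\gcd(w_2,w_3)=1$, the weighted projective plane is singular only along the line $\{t=0\}=\bb{P}(w_1,w_2)$, of transverse cyclic type of index $\gcd(w_1,w_2)=\alpha=d/g$, together with the isolated vertex $[0:0:1]$, which $E_0$ avoids (there $t\neq0$ while $u=v=0$). The curve $E_0$ meets the singular line exactly at the $r$ distinct roots of $G_r$ on $\{t=0\}$, transversally. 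Resolving the cyclic quotient singularity of $\widetilde X$ at each such point produces one Hirzebruch--Jung string attached to $E_0$; since the $r$ points carry the identical local type (they are permuted by the symmetry of $G_r$), the $r$ arms have equal length $\lambda$ and equal weight sequence. This gives the star shape of (vi), the components $E_k=E_k^1\cup\cdots\cup E_k^r$ of (iii), the rationality in (iv), and, by following the strict transforms of the $r$ lines along $E_0$, the ordering convention (vii).

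Finally I would read off the numerical weights from the precise local type. The surface germ of $\widetilde X$ at each root of $G_r$ is a cyclic quotient singularity whose minimal resolution is the Hirzebruch--Jung string with continued fraction $\alpha/\beta=[n_1,\dots,n_\lambda]$, where $\alpha=d/g$ and $\beta$ is determined by the local $\bb{C}^*$-weight through $\beta b'\equiv-1\pmod\alpha$; since all Hirzebruch--Jung entries are $\geq2$, this yields $(E_k^j)^2=-n_k\leq-2$ of (iv) and the sequence of (viii). The central weight $b=-E_0^2$ is then pinned down by the condition that the pull-back of a Cartier divisor through $0$ meets every exceptional curve trivially (equivalently, by the vanishing of the orbifold Euler number of the resolved star), which forces $b=g(1+b'\beta)/\alpha$; the congruence $\beta b'\equiv-1\pmod\alpha$ guarantees $\alpha\mid g(1+b'\beta)$, so $b\in\bb{Z}_{>0}$. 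The hard part will be exactly this last bookkeeping: computing the local cyclic-quotient invariants $(\alpha,\beta)$ at the $r$ points, matching the Hirzebruch--Jung continued-fraction convention, and deriving the central self-intersection $b$, because a single sign or convention error there propagates through all the weights of the graph.
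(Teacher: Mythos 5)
The paper does not prove this theorem at all: it is quoted verbatim from Orlik--Wagreich \cite{OW} (see also \cite{D92}, Section 4.10), so there is no internal proof to compare against. Your proposal is a faithful reconstruction of the standard argument behind that citation --- weighted blow-up with weights $(\alpha,\alpha,b')$, identification of $E_0$ with the curve $\{G_r+t^d=0\}$ in $\bb{P}(\alpha,\alpha,b')$, and Hirzebruch--Jung strings at the $r$ cyclic quotient points on $\{t=0\}$ --- and the outline is sound; in particular your arithmetic for (v) checks out ($N^2/(w_1w_2w_3)=r\gcd(r,d)$, the middle sum is $r+2\gcd(r,d)$, the last is $3$, giving $(r-2)(\gcd(r,d)-1)$), and you correctly observe that $E_0$ avoids the vertex $[0:0:1]$. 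The genuine gap is exactly the one you flag but do not close: the identification of the local quotient type at each of the $r$ points as $\frac{1}{\alpha}(1,\beta)$ with $\beta b'\equiv-1\Mod\alpha$, and the derivation of $b$. These are the entire content of items (iv), (viii) and the central weight, and asserting them is essentially restating the theorem. Moreover, your parenthetical justification for $b$ via ``the vanishing of the orbifold Euler number'' is wrong as stated: the orbifold Euler number of the Seifert fibration of the link is $-(b-r\beta/\alpha)=-N/(w_1w_2w_3)=-\gcd(r,d)^2/d\neq0$, and the displayed formula for $b$ is precisely the assertion that it takes this nonzero value; the correct mechanism is your primary one (intersecting exceptional curves with the total transform of a Cartier divisor through $0$), but that in turn requires computing the multiplicities of $E_0$ and of each $E_k^j$ in that total transform, which you have not done. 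As the theorem is anyway imported from \cite{OW}, the honest options are to cite it as the paper does or to carry out that local bookkeeping in full; the sketch as written does not yet constitute a proof.
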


\subsection{Numerical invariants}
Let $(X,0): f(u,v,t)=G_r(u,v)+t^d=0$ be a surface germ in $(\bb{C}^3,0)$, where $G_r$ is a product of $r$ distinct linear binary forms. Let $\pi:\widetilde{X}\to X$ be the resolution given in Theorem \ref{thm: resolhom}.
With the notations in the theorem, we shall write the divisors on $\widetilde{X}$,
$$
E_k=E_k^1+E_k^2\cdots+E_k^r,\qquad k=1,\cdots, \lambda.
$$
Clearly, each $E_k$ is a Cartier divisor with compact support on $\widetilde{X}$, and
$$
E_k^2=-rn_k,\quad k=1,\cdots,\lambda,
$$
and
$$
E_k\cdot E_{k'}=
\begin{cases}
r,&\qquad\text{if }k'=k\pm1,\\
0,&\qquad\text{otherwise.}
\end{cases}
$$
Also, we can see that
$$
\chi(\widetilde{X})-\chi(X)=-1+\chi(E_0)+r\lambda .
$$
Indeed, essentially $\tilde{X}$ is obtained from $X$ by replacing 0 by $(1+r\lambda )$ curves intersecting according to the dual graph; $E_0$ contributes to $\chi(E_0)$ for $\chi(\widetilde{X})$; each arm in the dual graph gives rise to a disjoint union of $\lambda$ copies of $\bb{P}^1\setminus\{\text{\rm one point}\}\cong\bb{C}$, and hence contributes $\lambda$ for $\chi(\widetilde{X})$.

Moreover, $K_{\widetilde{X}}$ has the following form
$$
K_{\widetilde{X}}=\pi^*K_X+a_0 E_0+\sum_{k,j}a_k^jE_k^j.
$$
Set
$$
\bg{E^j}=(E_1^j,\cdots, E_\lambda^j),
\qquad
\bg{a^j}=(a_1^j,\cdots, a_\lambda^j),
$$
then
$$
K_{\widetilde{X}}=\pi^*K_X+a_0E_0+\sum_j \bg{E^j(a^j)}^T.
$$
By considering the adjunction formula, we have
$$
E_0\cdot K_{\widetilde{X}}=g(E_0)-2-E_0^2=2g(E_0)-2+b
$$
and for all $k,j$,
$$
E_k^j\cdot K_{\widetilde{X}}=g(E_k^j)-2-(E_k^j)^2=-2+n_k,
$$
hence, by the projection formula and Theorem \ref{thm: resolhom}, we get a systems of equations
\begin{equation}\label{eq: sysa}
\begin{cases}
-b a_0+(a_1^1+\cdots+a_1^r)&=(r-2)(\gcd(r,d)-1)-2+b\\
-n_k a_k^j+(a_{k-1}^j+a_{k+1}^j)&=-2+n_k,\quad \forall k,j
\end{cases}
\end{equation}
where we have denoted $a_0^j=a_0$ and $a_{\lambda+1}^j=0$ for all $j$.

The intersection matrix of $E_0, E_k^l$'s is negative definite (see \cite{MD1}), so from \eqref{eq: sysa} we can uniquely solve $a_0,\bg{a^j}$'s. Moreover, we can see that if $(a_0,\bg{a^1},\cdots, \bg{a^r})$ is a solution of the system \eqref{eq: sysa}, $(a_0,\bg{a^j},\bg{a^2},\cdots, \bg{a^{j-1}},\bg{a^j},\bg{a^{j+1}},\cdots,\bg{a^r})$ is also a solution for any $j>1$, hence from uniqueness of the solution, it follows that
$$
a_k^1=a_k^2=\cdots=a_k^r
$$
for all $k$, hence
$$
K_{\widetilde{X}}=\pi^*K_X+a_0E_0+\sum_{k=1}^\lambda  a_kE_k=\pi^*K_X+\sum_{k=0}^\lambda a_kE_k,
$$
satisfying (following from \eqref{eq: sysa})
\begin{equation}
\label{eq: sysb}
\begin{cases}
-ba_0+ra_1&=(r-2)(\gcd(r,d)-1)-2+b\\
-n_k a_k+(a_{k-1}+a_{k+1})&=-2+n_k,\quad  k=1,\cdots,\lambda,\\
\end{cases}
\end{equation}
where $a_{\lambda+1}=0$.

\subsection{Examples of resolutions}

Now we apply the notations above and consider the resolution given in Theorem \ref{thm: resolhom} of the surface germ $(X,0): G_r(u,v)+t^d=0$.

\begin{ex}\label{ex: resolA}
When $r=2$, then $(X,0)$ is a singularity of type $A_{d-1}$, and its minimal resolution $\pi:\widetilde{X}\to X$ is well-known: the dual graph is
 \begin{center}{\setlength{\unitlength}{0.5mm}
\begin{picture}(100,20)
\put(10,15){\circle*{5}}
\put(12,15){\line(1,0){25}}
\put(39,15){\circle*{5}}
\put(41,15){\line(1,0){5}}
\put(60,15){\line(1,0){5}}
\put(65,15){\circle*{5}}
\put(67,15){\line(1,0){25}}
\put(90,15){\circle*{5}}

\put(49,13){$\cdots$}
\end{picture}
}
\end{center}
where there are $(d-1)$ vertices and each vertex has weight 2. Moreover, $K_{\widetilde{X}}=\pi^*K_X$ (see \cite{Re}), so we have
$DCI_{r,d}=0, DCII_{r,d}=d-1,$
hence
$$
DMY_{r,d}=3(d-1),\qquad
\mathcal{E}_{r,d}=(d-1)(r-1)(3-r)+DMY_{r,d}=4(d-1).\qe
$$
\end{ex}

Note that when $r=2$ and $d=rp+1$ for $p\geq1$, the resolution given in Theorem \ref{thm: resolhom} is not minimal. Indeed, the central curve $E_0$ is a $(-1)$-curve, i.e. $g(E_0)=0$ and $b=1$ in Theorem \ref{thm: resolhom}. Indeed, we have the following.

\begin{prop}\label{prop: not minimal}
The resolution given in Theorem \ref{thm: resolhom} is {\bf not} minimal if and only if $d\equiv1\Mod r$.
\end{prop}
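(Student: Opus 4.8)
The plan is to reduce non-minimality to a single numerical condition on the central curve $E_0$, and then to read that condition off from the weight $b$ computed in part (viii) of Theorem \ref{thm: resolhom}.

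\emph{Step 1: locating the only candidate $(-1)$-curve.} By Theorem \ref{thm: resolhom}(iv) each arm component $E_k^j$ is a smooth rational curve with $(E_k^j)^2=-n_k\leq-2$, so no arm component can be a $(-1)$-curve. Since every component of $\pi^{-1}(0)$ is contracted by $\pi$, the resolution is non-minimal (condition (iii) in the definition of minimal resolution fails) if and only if $E_0$ is an exceptional $(-1)$-curve. As $E_0^2=-b$, this means $E_0$ has genus $0$ and $b=1$.

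\emph{Step 2: computing $b$.} Write $g=\gcd(r,d)$, $\alpha=d/g$, $b'=r/g$, so that $b=g(1+b'\beta)/\alpha$ with $\beta b'\equiv-1\Mod\alpha$ and $0<\beta<\alpha$ (when $\alpha>1$). The congruence guarantees $\alpha\mid(1+b'\beta)$, so $m:=(1+b'\beta)/\alpha$ is a positive integer and $b=g\,m$ is a product of two positive integers. Hence $b=1$ forces $g=1$ and $m=1$. When $g=1$ we have $\alpha=d$, $b'=r$, and since $0<\beta<d$ the equation $m=1$ reads $1+r\beta=d$, i.e. $r\beta=d-1$; as $\beta$ is an integer this holds exactly when $r\mid(d-1)$, that is $d\equiv1\Mod r$. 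Conversely, if $d\equiv1\Mod r$ then $g=\gcd(r,d)=\gcd(r,1)=1$ automatically, $\beta=(d-1)/r$ is the unique residue in $(0,d)$ solving the congruence, and $m=b=1$; moreover $g=1$ makes \eqref{eq: genusEc} give $g(E_0)=\frac{1}{2}(r-2)(g-1)=0$, so $E_0$ is genuinely a $(-1)$-curve.

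Combining the two steps, the resolution is non-minimal $\iff b=1\iff d\equiv1\Mod r$, as claimed. The only real care needed is in the arithmetic of $\beta$: one must confirm that $b$ splits off the integer factor $g=\gcd(r,d)$, so that $b=1$ already pins down $\gcd(r,d)=1$ and simultaneously forces the genus of $E_0$ to vanish; and one should dispose of the degenerate branch $\alpha=1$ (which occurs only for $r=d$, where $b=g=d\geq2$ and $d\equiv1\Mod r$ is impossible), so that both sides of the equivalence are false there. I expect this bookkeeping, rather than any geometric input, to be the main point to get right.
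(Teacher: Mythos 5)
Your proof is correct and takes essentially the same route as the paper's: both reduce non-minimality to $E_0$ being a $(-1)$-curve (since all arm components have self-intersection $\leq -2$) and then solve $g(E_0)=0$, $b=1$ using the formula $b=\gcd(r,d)(1+b'\beta)/\alpha$. Your write-up is slightly more complete in that it spells out the converse direction and disposes of the degenerate case $\alpha=1$ explicitly, but the underlying argument is the paper's.
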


\begin{proof}
The resolution is not minimal only if $E_0$ is a $(-1)$-curve, since other exceptional irreducible curves all have self-intersection $\leq -2$. This is the case if and only if that $g(E_0)=0$ and $b=1$, namely,
$$
\begin{cases}
0&=g(E_0)=\frac{1}{2}(r-2)(\gcd(r,d)-1),\\
1&=b=\gcd(r,d)(b'\beta+1)/\alpha.
\end{cases}
$$
From the second equality, it follows that $\gcd(r,d)=1$ and $b'\beta+1=\alpha$. Now from $\gcd(r,d)=1$, we have by definition $\alpha=d/\gcd(r,d)=d$ and $b'=r/\gcd(r,d)=r$, so $d=r\beta+1$.\qedhere
\end{proof}

Consequently, if $d$ cannot be written  as $d=rp+1$ for some $p\geq1$, the resolution given in Theorem \ref{thm: resolhom} is already minimal. If, on the other hand, $d=rp+1$ for some $p\geq1$, the resolution given in Theorem \ref{thm: resolhom} is not minimal and $E_0$ is a $(-1)$-curve. By blowing down $E_0$, we get another resolution $\widetilde{X}'$ of $X$, and moreover, since in this case $\alpha=b'\beta+1=r\beta+1$, by performing the continued fraction decomposition of $\alpha/\beta=(r\beta+1)/\beta$, we have $n_1=r+1\geq 3$, hence $\widetilde{X}'$ is a minimal resolution of $X$.  But now the dual graph is
\begin{center}{\setlength{\unitlength}{0.5mm}
\begin{picture}(140,55)
\put(70,40){\line(1,0){15}}
\put(70,40){\line(-1,0){15}}
\put(70,40){\line(1,-1){13}}
\put(70,40){\line(-1,-1){13}}

\put(84,25){\circle*{5}}
\put(89,13){$\ddots$}
\put(56,25){\circle*{5}}
\put(43,13){$\iddots$}

\put(85,40){\circle*{5}}
\put(87,40){\line(1,0){15}}
\put(55,40){\circle*{5}}
\put(52,40){\line(-1,0){15}}

\put(100,40){\circle*{5}}
\put(106,38){$\cdots$}
\put(35,40){\circle*{5}}
\put(21,38){$\cdots$}

\put(119,40){\circle*{5}}
\put(122,40){\line(1,0){15}}
\put(135,40){\circle*{5}}
\put(17,40){\circle*{5}}
\put(15,40){\line(-1,0){15}}
\put(2,40){\circle*{5}}


\put(80,45){$n_1'$}
\put(51,45){$n_1'$}

\put(95,45){$n_2$}
\put(32,45){$n_2$}

\put(114,33){$n_{\lambda-1}$}
\put(133,33){$n_\lambda$}
\put(13,33){$n_{\lambda-1}$}
\put(-2,33){$n_\lambda$}

\end{picture}
}
\end{center}
where $n_1'=n_1-1$ and there is no central vertex, meaning that for the $r$ exceptional curves $E_1^1,\cdots, E_1^r$ corresponding to the vertices of weight $n_1'$, we have $E_1^j\cdot E_1^{j'}=1$ for $j\neq j'$. In particular, the new exceptional divisor does not have normal crossings.

In the sequel, by abuse of notation, we will not distinguish $\widetilde{X}$ and $\widetilde{X}'$ and always denote $\widetilde{X}$ the minimal resolution of $X$ obtained, by blowing down the central curve $E_0$ if necessary, from the resolution given in Theorem \ref{thm: resolhom}.

For later convenience, we consider specifically the case $d\equiv1\Mod r$.

\begin{ex}\label{ex: resolC}
Let $r\geq 3$ and $d=rp+1, p\geq1$. Then by Proposition \ref{prop: not minimal}, we have $g(E_0)=0$ and $b=1$.
According to Theorem \ref{thm: resolhom}, we have  $\alpha=d/\gcd(r,d)=d$ and $b'=r/\gcd(r,d)=r$, so $\alpha=b'p+1;$
since $0<\beta<\alpha$ is chosen so that $b'\beta\equiv-1\Mod\alpha$, we have $\beta=p$. In addition,
$$
\alpha/\beta=(rp+1)/p,
$$
so considering the continued fraction decomposition, we have
$$
\lambda=p,\qquad
n_1=r+1,\qquad n_2=n_3=\cdots=n_\lambda=2.
$$

Blowing $E_0$ down, we get the minimal resolution $\pi:\widetilde{X}\to X$.
The canonical divisor $K_{\widetilde{X}}$ has the following form
$$
K_{\widetilde{X}}=\pi^*K_X+\sum_{k=1}^\lambda a_kE_k,
$$
where $E_1=E_1^1+\cdots+E_1^r$ such that $(E_1^l)^2=-n_1'=-(n_1-1)$ and $E_1^l\cdot E_1^{l'}=1$ for $l<l'$.

Now taking the intersection product of $K_{\widetilde{X}}$ with $E_k^l$'s and applying the adjunction formula and projection formula, we have
$$
\begin{cases}
-n_1'a_1+a_2+(r-1)a_1&=-2+n_1'\\
-n_2a_2+(a_1+a_3)&=-2+n_2\\
-n_3a_2+(a_2+a_4)&=-2+n_3\\
\qquad\vdots &\\
-n_{\lambda-1}a_{\lambda-1}+(a_{\lambda-2}+a_\lambda)&=-2+n_{\lambda-1}\\
-n_\lambda a_\lambda+a_{\lambda-1}&=-2+n_\lambda
\end{cases}
$$
that is,
$$
\begin{cases}
-ra_1+a_2+(r-1)a_1&=-2+r\\
-2a_2+(a_1+a_3)&=0\\
-2a_2+(a_2+a_4)&=0\\
\qquad\vdots &\\
-2a_{\lambda-1}+(a_{\lambda-2}+a_\lambda)&=0\\
-2a_\lambda+a_{\lambda-1}&=0.
\end{cases}
$$
By considering from bottom equation to the second top one, we have
$$
a_k=(\lambda+1-k)a_\lambda,\quad k=1,\cdots,\lambda-1;
$$
hence from the first equation, we get $a_\lambda=-(r-2)$. It follows that
$$
K_{\widetilde{X}}=\pi^*K_X-(r-2)(E_p+2E_{p-1}+\cdots+pE_1),
$$
and thus,
$$
DCI_{r,d}=(r-2)^2(E_p+2E_{p-1}+\cdots+pE_1)^2.
$$
Note that $E_k^2=-rn_k=-2r$ for $k>1$ and
\begin{eqnarray*}
E_1^2&=&(E_1^1+\cdots+E_1^r)^2
     =-r;
\end{eqnarray*}
moreover, $E_k\cdot E_{k'}=r$ for $k'=k\pm 1$, =0 otherwise. Hence, we have
$$
DCI_{r,d}=(r-2)^2(E_p+2E_{p-1}+\cdots+pE_1)^2
         =-(r-2)^2rp
         =-(d-1)(r-2)^2.
$$
In addition, we have
$$
DCII_{r,d}=r\lambda =rp=d-1,
$$
thus
$$
DMY_{r,d}=3\,DCII_{r,d}-DCI_{r,d}
         =3(d-1)+(d-1)(r-2)^2.
$$
Consequently,
$$
\mathcal{E}_{r,d}=DMY_{r,d}+(d-1)(r-1)(3-r)
                 = 4(d-1).\qe
$$
\end{ex}

\section{Numerical invariants for minimal resolutions}\label{sec: general invariants}

Now we consider the general case of Theorem \ref{thm: resolhom}. Although our method applies for more general situations, we assume $r\geq3$ and $d\not\equiv1\Mod r$, since otherwise we are done by Example \ref{ex: resolA} and Example \ref{ex: resolC}. In particular, the resolution $\pi:\widetilde{X}\to X$ given in Theorem \ref{thm: resolhom} is a minimal resolution.

\subsection{Continued fraction decomposition}

In order to apply Theorem \ref{thm: resolhom}, we first deal with the continued fraction decomposition
$$
\frac{\alpha}{\beta}=n_1-\frac{1}{n_2-\frac{1}{\cdots-\frac{1}{n_\lambda}}}.
$$
Recall that $\beta$ is chosen such that $b'\beta\equiv-1\Mod\alpha$, hence $\gcd(\alpha,\beta)=1$. Let $$
\alpha_0,\alpha_1,\cdots,\alpha_{\lambda-1},\alpha_\lambda=1,\alpha_{\lambda+1}=0
$$ be a sequence of natural numbers such that $\gcd(\alpha_i,\alpha_{i+1})=1$ for $i=0,1,\cdots, \lambda$ and
\begin{equation}\label{eq: conalphai}
\frac{\alpha_i}{\alpha_{i+1}}=n_{i+1}-\frac{1}{n_{i+2}-\frac{1}{\cdots-\frac{1}{n_\lambda}}},\qquad i=0,1,\cdots, \lambda-1.
\end{equation}
Clearly, the numbers $\alpha_i$'s are uniquely determined by the continued fraction decomposition above, and $\alpha_i>0$ for $i<\lambda+1$.

Moreover, we have by definition \eqref{eq: conalphai}
$$
\frac{\alpha_{i-1}}{\alpha_{i}}=n_i-\frac{1}{\alpha_i/\alpha_{i+1}}=\frac{n_i\alpha_i-\alpha_{i+1}}{\alpha_i},
$$
hence
$$
\alpha_{i-1}=n_i\alpha_i-\alpha_{i+1},
$$
or in another more convenient formulation
\begin{equation}\label{eq: realphai}
\begin{pmatrix}
\alpha_{i-1}\\
\alpha_i
\end{pmatrix}
=\begin{pmatrix}
n_i & -1\\
1 & 0
\end{pmatrix}
\begin{pmatrix}
\alpha_i\\
\alpha_{i+1}
\end{pmatrix}.
\end{equation}

Set for $i=1,\cdots, \lambda$,
\begin{equation}\label{eq: conGi}
\bg{G_i}=
\begin{pmatrix}
n_i & -1\\
1 & 0
\end{pmatrix}
\end{equation}
be a $2\times 2$ matrix. Then the relation \eqref{eq: realphai} can be formulated as
$$
\begin{pmatrix}
\alpha_{i-1}\\
\alpha_i
\end{pmatrix}
=\bg{G_i}
\begin{pmatrix}
\alpha_i\\
\alpha_{i+1}
\end{pmatrix}.
$$
Thus, we have
\begin{equation}\label{eq: foralphai}
\begin{pmatrix}
\alpha_{i-1}\\
\alpha_i
\end{pmatrix}
=\bg{G_iG_{i+1}\cdots G_\lambda}
\begin{pmatrix}
\alpha_\lambda\\
\alpha_{\lambda+1}
\end{pmatrix}=\bg{G_iG_{i+1}\cdots G_\lambda}
\begin{pmatrix}
1\\
0
\end{pmatrix}
\end{equation}
for all $i\geq 1$.

Note also that by definition \eqref{eq: conalphai} and our conventions, $\alpha_0=\alpha$ and $\alpha_1=\beta$.

Let
$$
\bg{G}=
\bg{G_1G_2\cdots G_\lambda},
$$
then by \eqref{eq: foralphai}, we have
$$
\begin{pmatrix}
\alpha\\
\beta
\end{pmatrix}=
\begin{pmatrix}
\alpha_0\\
\alpha_1
\end{pmatrix}=\bg{G}
\begin{pmatrix}
1\\
0
\end{pmatrix}.
$$
So $\bg{G}$ is of the form
$$
\bg{G}=\begin{pmatrix}
\alpha & \gamma\\
\beta & \delta
\end{pmatrix}
$$
for some integers $\gamma,\delta$. In fact, we have the following more precise result.

\begin{prop}\label{prop: entryG}
With the notations as above and in Theorem \ref{thm: resolhom}, we have
$$
\bg{G}=\begin{pmatrix}
\alpha & b'-\alpha\\
\beta & \frac{1+b'\beta}{\alpha}-\beta
\end{pmatrix},
$$
namely, $\gamma=b'-\alpha$ and $\delta=-\beta+(1+b'\beta)/\alpha$.
\end{prop}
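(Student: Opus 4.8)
The plan is to view $\bg{G}$ as an element of $\mathrm{SL}_2(\bb{Z})$ and to pin down its second column $(\gamma,\delta)^T$ from two pieces of data: a congruence modulo $\alpha$ and a size estimate. Since each factor has determinant $\det\bg{G_i}=n_i\cdot 0-(-1)\cdot 1=1$, the product $\bg{G}$ lies in $\mathrm{SL}_2(\bb{Z})$, so $\alpha\delta-\beta\gamma=1$; in particular, once $\gamma$ is known, $\delta=(1+\beta\gamma)/\alpha$ is forced. Thus it suffices to prove $\gamma=b'-\alpha$, after which $\delta=(1+\beta(b'-\alpha))/\alpha=(1+b'\beta)/\alpha-\beta$ follows at once. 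We may assume $\lambda\geq1$, i.e. $\alpha>1$; when $\alpha=1$ we have $\lambda=0$, $\beta=0$, $b'=1$, and $\bg{G}$ is the empty product $\mathrm{Id}$, for which the claim is immediate.

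First I would extract the congruence. From $\alpha\delta-\beta\gamma=1$ we get $\beta\gamma\equiv-1\Mod\alpha$. By the defining property $b'\beta\equiv-1\Mod\alpha$ of $\beta$, this gives $\beta\gamma\equiv b'\beta\Mod\alpha$; since $\gcd(\alpha,\beta)=1$ (already noted above, and indeed $b'\beta\equiv-1$ forces both $\beta$ and $b'$ to be units mod $\alpha$), we may cancel $\beta$ to obtain $\gamma\equiv b'\Mod\alpha$, equivalently $\gamma\equiv b'-\alpha\Mod\alpha$.

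The substantive step is the size estimate $-\alpha<\gamma<0$, and for this I would exploit a transpose symmetry of the matrices $\bg{G_i}$. Writing $\bg{S}=\mathrm{diag}(-1,1)$, one checks directly that $\bg{G_i}^T=\bg{S}\bg{G_i}\bg{S}$ and $\bg{S}^2=\mathrm{Id}$, whence $\bg{G}^T=\bg{S}\bg{H}\bg{S}$ for the reversed product $\bg{H}=\bg{G_\lambda}\bg{G_{\lambda-1}}\cdots\bg{G_1}$. A direct computation then gives
$$
\bg{H}=\bg{S}\bg{G}^T\bg{S}=\begin{pmatrix}\alpha&-\beta\\-\gamma&\delta\end{pmatrix},
$$
so that the first column of $\bg{H}$ is $(\alpha,-\gamma)^T$. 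Now I would prove, by induction on the number of factors, that any nonempty product of matrices $\bg{G_i}$ (recall each weight satisfies $n_i\geq2$) has first column $(p,q)^T$ with $p>q>0$: the base case $\bg{G_i}(1,0)^T=(n_i,1)^T$ is clear, and if a product $P$ has first column $(p',q')^T$ with $p'>q'>0$, then $\bg{G_i}P$ has first column $\bg{G_i}(p',q')^T=(n_ip'-q',p')^T$ with $n_ip'-q'\geq 2p'-q'>p'>0$. Applying this to $\bg{H}$ yields $\alpha>-\gamma>0$, i.e. $-\alpha<\gamma<0$.

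Finally I would combine the two ingredients. Since $\lambda\geq1$ forces $r<d$, we have $0<b'<\alpha$ and hence $-\alpha<b'-\alpha<0$. Thus $\gamma$ and $b'-\alpha$ both lie in the open interval $(-\alpha,0)$, which contains exactly one representative of each nonzero residue class modulo $\alpha$; as they are congruent modulo $\alpha$ (and both nonzero there, since $0<b'<\alpha$), they must coincide, giving $\gamma=b'-\alpha$. The value $\delta=(1+b'\beta)/\alpha-\beta$ then follows from the determinant relation. The only real obstacle is the size bound $-\alpha<\gamma<0$: the congruence alone determines $\gamma$ only up to multiples of $\alpha$, and it is precisely the transpose identity $\bg{G}^T=\bg{S}\bg{H}\bg{S}$ that converts the otherwise inaccessible second column of $\bg{G}$ into the first column of a product of the same shape, to which the elementary monotonicity estimate applies.
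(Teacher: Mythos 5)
Your argument is correct, and its overall skeleton matches the paper's: both proofs pin down $\gamma$ by combining the determinant relation $\alpha\delta-\beta\gamma=1$ (hence $\beta\gamma\equiv-1\Mod\alpha$, forcing $\gamma\equiv b'-\alpha\Mod\alpha$) with a size bound placing $\gamma$ in an interval of length $\alpha$ containing a unique representative of that residue class. Where you differ is in how the size bound is obtained. The paper proves the claim $-\alpha<\gamma\leq0$ (and $-\beta<\delta\leq0$) by induction on the forward partial products $\bg{G_1}\cdots\bg{G_i}$, maintaining simultaneous invariants on all four entries ($\xi_i,\eta_i>0$, $\gamma_i\in(-\xi_i,0]$, $\delta_i\in(-\eta_i,0]$). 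You instead observe the conjugation identity $\bg{G_i}^T=\bg{S}\bg{G_i}\bg{S}$ with $\bg{S}=\mathrm{diag}(-1,1)$, which turns the second column of $\bg{G}$ into the first column of the reversed product $\bg{H}=\bg{G_\lambda}\cdots\bg{G_1}$, and then you need only the simpler single-column invariant $p>q>0$ under left multiplication by any $\bg{G_i}$ with $n_i\geq2$. Your induction is lighter (two inequalities instead of four, and it even yields the strict bound $\gamma<0$ when $\lambda\geq1$), at the cost of the extra transpose observation; the paper's version is more direct and gives the bound on $\delta$ for free. Both are complete; your explicit treatment of the degenerate case $\lambda=0$ (empty product, $b'=1$) is a small point the paper leaves implicit.
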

\begin{proof}
First, we show
\begin{claim}\label{claim: alphabetagammadelta}
$-\alpha<\gamma\leq 0$ and $-\beta<\delta\leq 0$.
\end{claim}

Assuming the claim, note that by definition,
$$
\det\bg{G}=\alpha\delta-\beta\gamma=1,
$$
hence $\beta\gamma\equiv-1\Mod\alpha$. Recall also the $b'\beta\equiv-1\Mod\alpha$, so we have $\gamma=b'-\alpha$ since $\gamma, b'-\alpha\in(-\alpha,0]$ and the equation $\beta x\equiv-1\Mod\alpha$ admits a unique solution satisfying $x\in(-\alpha,0]$. In addition,
$$
\delta=\frac{1+\beta\gamma}{\alpha}=\frac{1+\beta(b'-\alpha)}{\alpha}=\frac{1+b'\beta}{\alpha}-\beta.
$$

{\it Proof of Claim \ref{claim: alphabetagammadelta}: } For $i\geq 1$, let
$$
\begin{pmatrix}
\xi_i & \gamma_i\\
\eta_i & \delta_i
\end{pmatrix}
=\bg{G_1G_2\cdots G_i},
$$
then $\xi_i,\eta_i,\gamma_i,\delta_i$ are all integers. It suffices to show the following:
\begin{enumerate}[(i)]
\item $\xi_i,\eta_i>0$ for all $i$.
\item $\gamma_i\in(-\xi_i,0]$ and $\delta_i\in(-\eta_i,0]$ for all $i$.
\end{enumerate}
We prove this by induction on $i$. When $i=1$, then we have
$$
\begin{pmatrix}
\xi_1 & \gamma_1\\
\eta_1 & \delta_1
\end{pmatrix}=\bg{G_1}=\begin{pmatrix}
n_1 & -1\\
1 & 0
\end{pmatrix},
$$
and the conclusion obviously holds. Now assuming the validity of the result for $i$, we have
$$
\begin{pmatrix}
\xi_{i+1} & \gamma_{i+1}\\
\eta_{i+1} & \delta_{i+1}
\end{pmatrix}=\begin{pmatrix}
\xi_i & \gamma_i\\
\eta_i & \delta_i
\end{pmatrix}\bg{G_{i+1}}=\begin{pmatrix}
\xi_i & \gamma_i\\
\eta_i & \delta_i
\end{pmatrix}\begin{pmatrix}
n_{i+1} & -1\\
1 & 0
\end{pmatrix}.
$$
Therefore,
\begin{enumerate}[i)]
\item $\xi_{i+1}=n_{i+1}\xi_i+\gamma_i>2\xi_i-\xi_i>0$ since $n_{i+1}\geq 2$ and by inductive hypothesis, $\xi_i>0$ and $\gamma_i\in(-\xi_i,0]$. Similarly, $\eta_{i+1}=n_{i+1}\eta_i+\gamma_i>0$ by the inductive hypothesis $\eta_i>0$ and $\gamma_i\in(-\eta_i,0]$.
\item $\gamma_{i+1}=-\xi_i<0$ since $\xi_i>0$; in addition,
$$
\gamma_{i+1}+\xi_{i+1}=-\xi_i+(n_{i+1}\xi_i+\gamma_i)>(n_{i+1}-2)\xi_i\geq 0,
$$
since $n_{i+1}\geq 2$ and $\gamma_i>-\xi_i$ by the inductive hypothesis. Similarly, $\delta_{i+1}=-\eta_i<0$ and
$$
\delta_{i+1}+\eta_{i+1}=-\eta_i+(n_{i+1}\eta_i+\delta_i)>(n_{i+1}-2)\eta_i\geq 0.
$$
\end{enumerate}
We are done.\qedhere
\end{proof}

\subsection{Formulae for the canonical divisor}

As before, we assume
$$
K_{\widetilde{X}}=\pi^*K_X+\sum_{i=0}^\lambda a_iE_i.
$$
Therefore,
$$
DCI_{r,d}=\sum_{i=0}^\lambda  a_i^2E_i^2+2\sum_{i=0}^{\lambda-1}a_ia_{i+1}E_i\cdot E_{i+1}.
$$
Recall that $E_0^2=-b$ and $E_i^2=-rn_i$ for $i>0$. In addition, $E_i\cdot E_{i'}=r$ for $i'=i\pm1$, =0 otherwise. Hence, we have
$$
DCI_{r,d}=-ba_0^2+r\sum_{i=1}^\lambda  a_i(2a_{i-1}-n_ia_i).
$$
By \eqref{eq: sysb}, we have $-n_ia_i+a_{i-1}+a_{i+1}=-2+n_i$, so
\begin{eqnarray}
DCI_{r,d}&=&a_0(-ba_0+ra_1)+r\biggl(\sum_{i=1}^\lambda  n_ia_i-2\sum_{i=1}^\lambda a_i\biggr).\label{eq: dc1ni2}
\end{eqnarray}
By \eqref{eq: sysb}, we have $-n_ia_i+a_{i-1}+a_{i+1}=-2+n_i$, so
\begin{eqnarray*}
\sum_{i=1}^\lambda  n_ia_i-2\sum_{i=1}^\lambda a_i&=&\sum_{i=1}^\lambda \biggl(a_{i-1}+a_{i+1}-(n_i-2)\biggr)-2\sum_{i=1}^\lambda a_i\\
                                    &=&-\sum_{i=1}^\lambda (n_i-2)+(a_0-a_1-a_\lambda);
\end{eqnarray*}
consequently, by \eqref{eq: dc1ni2}, we get
\begin{eqnarray}
DCI_{r,d}&=&a_0(-ba_0+ra_1)-2+b)-r\sum_{i=1}^\lambda (n_i-2)+r(a_0-a_1-a_\lambda).\label{eq: dc1a0a1aq}
\end{eqnarray}

Now we compute $a_0$, $a_1$ and $a_\lambda$. By equation \eqref{eq: sysb}, we have
$$
\begin{cases}
-b a_0+ra_1&=(r-2)(\gcd(r,d)-1)-2+b\\
-n_1a_1+(a_0+a_2)&=-2+n_1\\
-n_2a_2+(a_1+a_3)&=-2+n_2\\
\qquad\vdots &\\
-n_{\lambda-1}a_{\lambda-1}+(a_{\lambda-2}+a_\lambda)&=-2+n_{\lambda-1}\\
-n_\lambda a_\lambda+a_{\lambda-1}&=-2+n_\lambda.
\end{cases}
$$
Let $a_i^*=a_i+1$ for $i=0,1,\cdots, \lambda+1$. Recall also that $a_{\lambda+1}=0$. Then the above equations can be reformulated into a more convenient form:
$$
\begin{cases}
-b a_0^*+ra_1^*&=\gcd(r,d)(r-2)\\
-n_1a_1^*+(a_0^*+a_2^*)&=0\\
-n_2a_2^*+(a_1^*+a_3^*)&=0\\
\qquad\vdots &\\
-n_{\lambda-1}a_{\lambda-1}^*+(a_{\lambda-2}^*+a_\lambda^*)&=0\\
-n_\lambda a_\lambda^*+(a_{\lambda-1}^*+a_{\lambda+1}^*)&=0.
\end{cases}
$$
With the help of the matrices $\bg{G_i}$ defined in \eqref{eq: conGi}, we have
$$
\begin{pmatrix}
a_{i-1}^*\\
a_i^*
\end{pmatrix}=\bg{G_i}\begin{pmatrix}
a_i^*\\
a_{i+1}^*
\end{pmatrix},
$$
hence
$$
\begin{pmatrix}
a_0^*\\
a_1^*
\end{pmatrix}=
\bg{G_1\cdots G_\lambda}\begin{pmatrix}
a_\lambda^*\\
a_{\lambda+1}^*
\end{pmatrix}=\bg{G}\begin{pmatrix}
a_\lambda^*\\
1
\end{pmatrix}.
$$
By Proposition \ref{prop: entryG}, we thus have
\begin{equation}\label{eq: a0a1}
\begin{cases}
a_0^*&=\alpha a_\lambda^*+(b'-\alpha)=\alpha a_\lambda+b'\\
a_1^*&=\beta a_\lambda^*+(\frac{1+b'\beta}{\alpha}-\beta)=\beta a_\lambda+(1+b'\beta)/\alpha.
\end{cases}
\end{equation}
Furthermore, it also holds $-b a_0^*+ra_1^*=\gcd(r,d)(r-2)$, thus we obtain three equations in $a_0,a_1,a_\lambda$. The solution is as follows, whose proof involves only direct computations and is left to the reader.
\begin{lem}
$$
\begin{cases}
a_0&=(2-r)\alpha+b'-1,\\
a_1&=(2-r)\beta+\frac{1+b'\beta}{\alpha}-1,\\
a_\lambda&=-(r-2).
\end{cases}
$$
\end{lem}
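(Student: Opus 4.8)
The plan is to treat the statement as the solution of a small linear system and to solve it by elimination. The two relations in \eqref{eq: a0a1} already express $a_0^*$ and $a_1^*$ as affine functions of the single unknown $a_\lambda^*$, so the only remaining constraint is the top equation of the starred system, namely $-b\,a_0^*+r\,a_1^*=\gcd(r,d)(r-2)$. First I would substitute the expressions $a_0^*=\alpha a_\lambda^*+(b'-\alpha)$ and $a_1^*=\beta a_\lambda^*+(\frac{1+b'\beta}{\alpha}-\beta)$ into this relation, collecting the result in the form $\big(-b\alpha+r\beta\big)a_\lambda^*+\big(-b(b'-\alpha)+r(\tfrac{1+b'\beta}{\alpha}-\beta)\big)=\gcd(r,d)(r-2)$. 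This reduces everything to one linear equation in $a_\lambda^*$.

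The second step is to simplify the coefficient and the constant term using the arithmetic relations from Theorem \ref{thm: resolhom}. Writing $g=\gcd(r,d)$, recall $\alpha=d/g$, $b'=r/g$, whence the crucial identity $g b'=r$, together with $b=g(1+b'\beta)/\alpha$, i.e.\ $b\alpha=g(1+b'\beta)$. With these, the coefficient of $a_\lambda^*$ collapses: $-b\alpha+r\beta=-g(1+b'\beta)+r\beta=-g-g b'\beta+r\beta=-g$, since $g b'\beta=r\beta$. A parallel computation, using $bb'=r(1+b'\beta)/\alpha$, shows the constant term equals $g(1+b'\beta)-r\beta=g+g b'\beta-r\beta=g$. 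Hence the equation becomes simply $-g\,a_\lambda^*+g=g(r-2)$.

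The final step is to solve and back-substitute. Dividing by $g$ gives $a_\lambda^*=3-r$, so $a_\lambda=a_\lambda^*-1=2-r=-(r-2)$, matching the third line of the lemma. Feeding this value back into the two relations of \eqref{eq: a0a1} and using $a_i=a_i^*-1$ yields $a_0=\alpha a_\lambda+b'-1=(2-r)\alpha+b'-1$ and $a_1=\beta a_\lambda+(1+b'\beta)/\alpha-1=(2-r)\beta+\frac{1+b'\beta}{\alpha}-1$, which are exactly the first two lines.

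I do not expect any genuine obstacle here, since the negative definiteness of the intersection matrix already guarantees uniqueness of the solution, so it suffices to verify that the displayed triple satisfies the three constraints. The only point requiring care is the algebraic bookkeeping in collapsing the coefficient and constant term of the eliminated equation, both of which hinge on the identity $g b'=r$ and on $b\alpha=g(1+b'\beta)$; these cancellations are what make the answer clean. Accordingly, this is the computation the authors defer to the reader.
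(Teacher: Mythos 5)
Your proposal is correct and is exactly the "direct computation left to the reader" that the paper invokes: substitute the two relations of \eqref{eq: a0a1} into $-b\,a_0^*+r\,a_1^*=\gcd(r,d)(r-2)$, use $\gcd(r,d)\,b'=r$ and $b\alpha=\gcd(r,d)(1+b'\beta)$ to collapse the coefficient and constant to $-\gcd(r,d)$ and $\gcd(r,d)$, solve for $a_\lambda^*=3-r$, and back-substitute. The algebra checks out, so nothing further is needed.
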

As a corollary, it follows from \eqref{eq: dc1a0a1aq} that
\begin{equation}\label{eq: dc1result}
DCI_{r,d}=-d(r-2)^2-r\sum_{i=1}^\lambda (n_i-2)+2(r-2)(r-\gcd(r,d))+(r-b).\\
\end{equation}
It is clear, in the above formula, that the first term $-d(r-2)^2$ is obviously negative, and also $-r\sum_{i=1}^\lambda (n_i-2)\leq 0$ since $n_i\geq 2$. Moreover, $2(r-2)(r-\gcd(r,d))\geq 0$ since $r\geq 3$ and $r\geq\gcd(r,d)$; in addition,
$$
r-b=\gcd(r,d)\biggl(b'-\frac{1+b'\beta}{\alpha}\biggr)=\frac{\gcd(r,d)}{\alpha}(b'(\alpha-\beta)-1)
$$
is nonnegative since $\alpha-\beta>0$ and $b'\geq 1$.

\subsection{Estimations of the Miyaoka-Yau numbers}

We consider the Miyaoka-Yau number of the minimal resolution $\pi: \widetilde{F}\to\ov{F}$. Then we have
\begin{equation}\label{eq: dc2result}
DCII_{r,d}=-1+\chi(E_0)+r\lambda =1+r\lambda -(r-2)(\gcd(r,d)-1).
\end{equation}
By definition, $DMY_{r,d}=3\,DCII_{r,d}-DCI_{r,d}$. Hence, in view of \eqref{eq: dc1result}, we get
\begin{eqnarray*}
DMY_{r,d}&=&\biggl(3(1+r\lambda )+r\sum_{i=1}^\lambda (n_i-2)\biggr)+(d-1)(r-2)^2\\
         & &-\biggl((r-2)(\gcd(r,d)+r-1)+(r-b)\biggr).
\end{eqnarray*}
In addition, $\mathcal{E}_{r,d}=DMY_{r,d}+(d-1)(r-1)(3-r)$ by definition, it follows that
\begin{eqnarray}
\mathcal{E}_{r,d}&=&\biggl(r\sum_{i=1}^\lambda (n_i+1)\biggr)+(d+2)\nonumber\\
                 & &-\biggl((r-2)(\gcd(r,d)+r-1)+(r-b)\biggr). \label{eq: erdresult}
\end{eqnarray}

Now we begin to estimate $\mathcal{E}_{r,d}$. First, we have
$$
 (r-2)(\gcd(r,d)+r-1)+(r-b)
 \leq(r-2)(2r-1)+r
 =2(r-1)^2,
$$
so the following hold:
\begin{eqnarray}
\mathcal{E}_{r,d}&\geq&\biggl(r\sum_{i=1}^\lambda (n_i+1)\biggr)+(d+2)-2(r-1)^2
                 >-2r(r-1).\label{eq: erdest}
\end{eqnarray}

\begin{rk}
The above estimate is also true when $d\equiv1\Mod r$ by Example \ref{ex: resolC} and when $r=2$ by Example \ref{ex: resolA}.
\end{rk}

As an application of the above calculations, we give new examples of computing Chern numbers and $\mathcal{E}_{r,d}$ by directly using Formulae \eqref{eq: dc1result}, \eqref{eq: dc2result} and \eqref{eq: erdresult}.

\begin{ex}\label{ex: resolB}
Let $r\geq 3$ and $d=rp, p\geq 1$. Then the resolution $\pi:\widetilde{X}\to X$ in Theorem \ref{thm: resolhom} is minimal. Then we have
\begin{enumerate}[(i)]
\item $\gcd(r,d)=r$, so $\alpha=d/\gcd(r,d)=p$ and $b'=r/\gcd(r,d)=1$. Since $\alpha=b'p$ and by assumption $\beta$ is chosen so that $0\leq\beta<\alpha$ satisfying $b'\beta\equiv-1\Mod\alpha$, we have $\beta=p-1$.
\item We get
$$
b=\frac{\gcd(r,d)(1+b'\beta)}{\alpha}=r.
$$
\item We have
$$
\frac{\alpha}{\beta}=\frac{p}{p-1},
$$
doing the continued fraction decomposition, we see that
$$
\lambda=p-1,\qquad
n_1=\cdots=n_\lambda=2.
$$
Therefore,
$$
r\sum_{i=1}^\lambda (n_i+1)=3r\lambda =3r(p-1)=3d-3r.
$$
\item Eventually, by \eqref{eq: dc1result}, we have
\begin{eqnarray*}
DCI_{r,d}&=&-d(r-2)^2;
\end{eqnarray*}
by \eqref{eq: dc2result}, we have
$$
DCII_{r,d}=1+r(p-1)-(r-2)(r-1)
         =d-(r-1)^2;
$$
by \eqref{eq: erdresult}, we obtain
\begin{eqnarray*}
\mathcal{E}_{r,d}&=&(3d-3r)+(d+2)-\biggl((r-2)(2r-1)+0\biggr)\\
                 &=&4d-2r(r-1).
\end{eqnarray*}
\end{enumerate}
\end{ex}

\begin{ex}\label{ex: resolD}
Let $r\geq 3$ and $d=r(p-1)+(r-1)=rp-1$ for $p\geq 2$. Then we have
\begin{enumerate}[(i)]
\item $\gcd(r,d)=1$, so $\alpha=d/\gcd(r,d)=d$ and $b'=r/\gcd(r,d)=r$. Since $\alpha=b'p-1$ and by assumption $\beta$ is chosen so that $0\leq\beta<\alpha$ satisfying $b'\beta\equiv-1\Mod\alpha$, we have $\beta=\alpha-p=p(r-1)-1$.
\item We get
$$
b=\frac{\gcd(r,d)(1+b'\beta)}{\alpha}=r-1.
$$
\item We have
$$
\frac{\alpha}{\beta}=\frac{d}{r(p-1)-1}=\frac{rp-1}{r(p-1)-1},
$$
doing the continued fraction decomposition, we see that
$
\lambda=p+r-3,
$
and
$$
n_1=\cdots=n_{p-2}=2, \qquad n_{p-1}=3,\qquad n_{p}=n_{p+1}=\cdots=n_{p+r-3}=2.
$$
Therefore,
$$
r\sum_{i=1}^\lambda (n_i+1)=r(3\lambda+1)=3r(p+r)-8r=3(d+1)+3r^2-8r.
$$
\item Eventually,  by \eqref{eq: dc1result}, we have
\begin{eqnarray*}
DCI_{r,d}&=&-d(r-2)^2+(2r-5)(r-1);
\end{eqnarray*}
by \eqref{eq: dc2result}, we have
$$
DCII_{r,d}=1+r(p+r-3)
         =d+(r-1)(r-2);
$$
by \eqref{eq: erdresult}, we obtain
\begin{eqnarray*}
\mathcal{E}_{r,d}&=&\biggl(3(d+1)+3r^2-8r\biggr)+(d+2)-\biggl(r(r-2)+1\biggr)\\
                 &=&4(d+1)+2r(r-3).
\end{eqnarray*}
\end{enumerate}
Consequently, when $r=3$, we have the following:
\begin{enumerate}[(1)]
\item when $3|d$, we have $\mathcal{E}_{3,d}=4d-12$ by Example \ref{ex: resolB};
\item when $d\equiv1\Mod3$, we have $\mathcal{E}_{3,d}=4(d-1)$ by Example \ref{ex: resolC};
\item when $d\equiv2\Mod 3$, we have $\mathcal{E}_{3,d}=4(d+1)$ by the results above.
\end{enumerate}
In particular, when $d\geq 4$, it is always true that $\mathcal{E}_{3,d}\geq 4(d-3)$.
\end{ex}

\section{Proof of Theorem \ref{thm: not ball quotient}}

Let $\pi:\widetilde{F}\to\ov{F}$ be the minimal resolution obtained in previous sections. We prove that $MY(\widetilde{F})\neq 0$ under the assumption of Theorem \ref{thm: not ball quotient}.

The proof will be divided into three cases with respect to the values of $t_d$ and $t_{d-1}$.

\begin{enumerate}[i)]
\item
When the lines in $\mathcal{A}$ form a pencil, namely, $t_d=1$, we have, $d\neq 3$ by the assumption of Theorem \ref{thm: not ball quotient}; moreover,  by Example \ref{ex: resolA}, Example \ref{ex: resolB} and Formula \eqref{eq: MY},
$$
MY(\widetilde{F})=\mathcal{E}_{d,d}=4d-2d(d-1)=2d(3-d).
$$

\item If $t_d=0$ while $t_{d-1}\neq 0$, then we have $t_{d-1}=1$ and $t_2=d-1$ (if $d=3$, $t_2=d=3$). Moreover, by Example \ref{ex: resolA} and Example \ref{ex: resolC}, in view of \eqref{eq: MY}, we have
\begin{eqnarray*}
MY(\widetilde{F})&=&t_2\mathcal{E}_{2,d}+t_{d-1}\mathcal{E}_{r,d}\\
                 &=&(d-1)\biggl(4(d-1)\biggr)+4(d-1)\\
                 &=&4d(d-1)>0.
\end{eqnarray*}
\item Now we consider the case $t_d=0, t_{d-1}=0$. Then by the estimation \eqref{eq: erdest}, we have
\begin{eqnarray*}
MY(\widetilde{F})&=&t_2\mathcal{E}_{2,d}+t_3\mathcal{E}_{3,d}+\sum_{r\geq4}t_r\mathcal{E}_{r,d}\\
                 &\geq&t_2\mathcal{E}_{2,d}+t_3\mathcal{E}_{3,d}-2\sum_{r\geq 4}t_rr(r-1)\\
                 &=&\biggl(t_2(\mathcal{E}_{2,d}+4)+t_3(\mathcal{E}_{3,d}+12)\biggr)-2\sum_rt_rr(r-1).
\end{eqnarray*}
From Remark \ref{rk: d(d-1)2}, we have $\sum_rt_rr(r-1)=d(d-1)$; moreover, from Example \ref{ex: resolA} and the end of Example \ref{ex: resolD}, we deduce that
\begin{eqnarray}
MY(\widetilde{F})&\geq&4d(t_2+t_3)-2d(d-1)=2d\biggl(2(t_2+t_3)-(d-1)\biggr).\label{eq: MYFtilde}
\end{eqnarray}
Now we use the celebrated inequality in the second remark added in proof of \cite{HI}, which states that
$$
t_2+\frac{3}{4}t_3\geq d+\sum_{r\geq 5}(r-4)t_r,
$$
see also \cite{Sa} or Appendix A of \cite{Tr}. In particular, $t_2+t_3\geq d$. It follows immediately, by \eqref{eq: MYFtilde}, that
$$
MY(\widetilde{F})>0.
$$
\end{enumerate}
The proof now is complete.\qed

\begin{rk}\label{rk: exceptional case}
When $d=|\mathcal{A}|=3$ and $\mathcal{A}$ is a pencil, i.e., $t_3=1$, then $MY(\widetilde{F})=0$. Moreover, from Example \ref{ex: resolB}, we obtain $DCI_{3,3}=-3$. Hence, by Example \ref{ex: k1fbar}, we have
$$
c_1^2(\widetilde{F})=K_{\widetilde{F}}^2=K_{\ov{F}}^2+DCI_{3,3}=3\times(3-4)^2-3=0.
$$
Moreover, $c_2(\widetilde{F})=0$ since $MY(\widetilde{F})=3c_2(\widetilde{F})-c_1^2(\widetilde{F})=0$.

Since $c_2>0$ for a smooth projective surface of general type (see \cite{BHPV}, Chapter VII), it follows that $\widetilde{F}$ is not of general type. In particular, $\widetilde{F}$ is not a ball quotient.
\end{rk}

\section{Surfaces of general type associated to line arrangements}\label{sec: general type}

Let $\mathcal{A}$ be a line arrangement in $\bb{P}^2$. By Theorem \ref{thm: not ball quotient}, $MY(\widetilde{F})>0$ when $\mathcal{A}$ is not a pencil; it is natural to ask whether $\widetilde{F}$ is a surface of general type. Inspired by \cite{HI}, it is natural to conjecture that $\widetilde{F}$ is of general type if $\mathcal{A}$ is not too singular, i.e., $t_r=0$ for $r$ large compared with $d$.

\subsection{A general type criterion}
We first provide a criterion for a surface to be of general type.

\begin{prop}\label{prop: criterion general1}
Let $X$ be a smooth projective surface. If $c_1^2(X)>9$, then $X$ is of general type.
\end{prop}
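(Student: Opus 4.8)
The plan is to argue by contraposition using the Enriques--Kodaira classification of smooth projective surfaces (see \cite{BHPV}): I will show that if $X$ is \emph{not} of general type, then necessarily $c_1^2(X)=K_X^2\le 9$, contradicting the hypothesis. Since for a smooth projective surface being of general type is equivalent to having Kodaira dimension $\kappa(X)=2$, I assume $\kappa(X)\in\{-\infty,0,1\}$ and bound $K_X^2$ in each case. The uniform mechanism is the behaviour of $K^2$ under blowups: if $\sigma\colon\widetilde{S}\to S$ is the blowup of a smooth surface at a point with exceptional curve $E$, then $K_{\widetilde{S}}=\sigma^*K_S+E$ and hence $K_{\widetilde{S}}^2=K_S^2-1$. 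Thus contracting $(-1)$-curves only increases $K^2$, and it suffices to bound $K^2$ on a suitable (relatively) minimal model $X_{\min}$, after which $K_X^2\le K_{X_{\min}}^2$.

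First I would dispose of $\kappa(X)\in\{0,1\}$. Here $X$ has a unique minimal model $X_0$, obtained from $X$ by finitely many blowdowns, and by the classification $K_{X_0}^2=0$: for $\kappa=0$ the canonical class of $X_0$ is numerically trivial, while for $\kappa=1$ the minimal model is a relatively minimal elliptic surface whose canonical class is vertical, so again $K_{X_0}^2=0$. The blowup formula then gives $K_X^2\le K_{X_0}^2=0<9$.

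Next I would treat $\kappa(X)=-\infty$, where $X$ is either ruled over a curve of positive genus or rational. If $X$ is ruled over a smooth curve of genus $g\ge 1$, a relatively minimal model is geometrically ruled with $K^2=8(1-g)\le 0$, so $K_X^2\le 0$. If $X$ is rational, then $X$ is obtained by blowing up either $\mathbb{P}^2$, with $K^2=9$, or a Hirzebruch surface $\mathbb{F}_n$ with $n\ne 1$, with $K^2=8$; in either case $K_X^2\le 9$. Combining all four cases, every surface that is not of general type satisfies $K_X^2\le 9$, the value $9$ being attained only by $\mathbb{P}^2$ (which is precisely why the inequality in the statement must be strict).

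The only nonroutine ingredient is the classification-theoretic fact that a surface not of general type admits a (relatively) minimal model with $K^2\le 9$; this is where the Enriques--Kodaira classification is essential and constitutes the main step, the reduction to the minimal model via the blowup formula being elementary. This establishes the contrapositive: $c_1^2(X)=K_X^2>9$ forces $\kappa(X)=2$, i.e.\ $X$ is of general type.
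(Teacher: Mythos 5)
Your proof is correct and follows essentially the same route as the paper: pass to a minimal model using the fact that blowing down a $(-1)$-curve increases $K^2$ by $1$, then invoke the Enriques--Kodaira classification to conclude that a minimal surface with $K^2>9$ must be of general type. You simply spell out the classification case analysis (the bound $K^2\le 9$ for minimal non-general-type surfaces, with equality only for $\mathbb{P}^2$) that the paper leaves implicit.
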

\begin{proof}
Let $X'$ be a minimal model of $X$. Then $X'$ is obtained by successively blowing down $(-1)$-curves. Note that once we blow down a $(-1)$-curve, $c_1^2$ increases by 1, so $c_1^2(X')\geq c_1^2(X)>9$, hence by the Enriques-Kodaira classification of surfaces (see \cite{BHPV}, Chapter VI), $X'$ is of general type, and thus so is $X$.\qedhere
\end{proof}

\subsection{Surfaces associated to line arrangements with only nodes and triple points}
In the sequel, we consider surfaces associated to line arrangements such that $t_r=0$ whenever $r\geq 4$, and we prove Theorem \ref{thm: general type}.

For $r=2$, by Example \ref{ex: resolA}, we have $DCI_{2,d}=0$ and $DCII_{2,d}=d-1$.

When $r=3$, we have
\begin{enumerate}[(i)]
\item If $3|d$, $DCI_{3,d}=-d, DCII_{3,d}=d-4$ by Example \ref{ex: resolB};
\item If $d\equiv1\Mod 3$, $DCI_{3,d}=-(d-1), DCII_{3,d}=d-1$ by Example \ref{ex: resolC};
\item If $d\equiv2\Mod 3$, we have $DCI_{3,d}=-(d-2), DCII_{r,d}=d+2$ by Example \ref{ex: resolD}
\end{enumerate}

\subsubsection{Case $3|d$}
When $d=3p$, we have by \eqref{eq: c1fbar} that $$
    c_1^2(\widetilde{F})= K_{\ov{F}}^2+\sum_rt_r DCI_{r,d}
                        =d(d-4)^2-d t_3
                        =d((d-4)^2-t_3),
    $$
    and by \eqref{eq: c2fbar},
    $$
    c_2(\widetilde{F})=\chi(\ov{F})+\sum_rt_r DCII_{r,d}=d(d^2-4d+6)-3t_3d.
    $$

 By Remark \ref{rk: d(d-1)2}, we have $2t_2+6t_3=d(d-1)$, hence $t_3\leq d(d-1)/6$ and thus when $d=3p\geq 9$,
 $$
 c_1^2(\widetilde{F})\geq d\biggl((d-4)^2-\frac{d(d-1)}{6}\biggr)>9.
 $$
Therefore $\widetilde{F}$ is of general type by Proposition \ref{prop: criterion general1}. In addition,
$$
\frac{c_1^2(\widetilde{F})}{c_2(\widetilde{F})}=\frac{(d-4)^2-t_3}{d^2-4d+6-3t_3}=\frac{1}{3}\biggl(1+\frac{2(d-3)(d-7)}{d^2-4d+6-3t_3}\biggr).
$$

\subsubsection{Case $d\equiv1\Mod 3$}
When $d=3p+1$, we have
    $$
    c_1^2(\widetilde{F})= K_{\ov{F}}^2+\sum_rt_r DCI_{r,d}
                        =d(d-4)^2-(d-1)t_3,
    $$
    and
    $$
    c_2(\widetilde{F})=\chi(\ov{F})+\sum_rt_r DCII_{r,d}=d(d^2-4d+6)-3(d-1)t_3.
    $$
Since $2t_2+6t_3=d(d-1)$, we have $t_3\leq d(d-1)/6$ so, when $p\geq 2$ or equivalently $d\geq7$,
$$
c_1^2(\widetilde{F})\geq d(d-4)^2-\frac{1}{6}d(d-1)^2>9,
$$
hence, $\widetilde{F}$ is of general type by Proposition \ref{prop: criterion general1}. In addition,
$$
\frac{c_1^2(\widetilde{F})}{c_2(\widetilde{F})}=\frac{d(d-4)^2-(d-1)t_3}{d(d^2-4d+6)-3(d-1)t_3}=\frac{1}{3}\biggl(1+\frac{2d(d-3)(d-7)}{d(d^2-4d+6)-3(d-1)t_3}\biggr).
$$

\subsubsection{Case $d\equiv2\Mod 3$}

When $d=3p+2$, we have
    $$
    c_1^2(\widetilde{F})= K_{\ov{F}}^2+\sum_rt_r DCI_{r,d}
                        =d(d-4)^2-(d-2)t_3,
   $$
   and
   $$
    c_2(\widetilde{F})=\chi(\ov{F})+\sum_rt_r DCII_{r,d}=d(d^2-4d+6)-3(d-2)t_3.
    $$

Since $2t_2+6t_3=d(d-1)$, we have $t_3\leq d(d-1)/6$ so, when $p\geq 2$ or equivalently $d\geq8$,
\begin{eqnarray*}
c_1^2(\widetilde{F})&\geq& d(d-4)^2-\frac{1}{6}d(d-1)(d-2)>9,
\end{eqnarray*}
hence, $\widetilde{F}$ is of general type by Proposition \ref{prop: criterion general1}. In addition,
$$
\frac{c_1^2(\widetilde{F})}{c_2(\widetilde{F})}=\frac{d(d-4)^2-(d-2)t_3}{d(d^2-4d+6)-3(d-2)t_3}=\frac{1}{3}\biggl(1+\frac{2d(d-3)(d-7)}{d(d^2-4d+6)-3(d-2)t_3}\biggr).
$$

Conclusion: in any case, $\frac{c_1^2(\widetilde{F})}{c_2(\widetilde{F})}$ is an increasing function in $t_3$ with fixed $d\geq 7$. As $t_3\leq d(d-1)/6$, it follows that
$$
1\leq\liminf_{d\to\infty}\frac{c_1^2(\widetilde{F})}{c_2(\widetilde{F})}\leq \limsup_{d\to\infty}\frac{c_1^2(\widetilde{F})}{c_2(\widetilde{F})}\leq\frac{5}{3}.
$$

Moreover, Theorem \ref{thm: general type} follows from the above discussions.

\section{Chern numbers and Hodge numbers}\label{sec: chern and hodge}
In \cite{HI}, some line arrangements are given so that they give ball quotients through the construction via Kummer covers; such arrangements includes the Hesse arrangement and the arrangement $\mathcal{A}(2,2,3): (x^2-y^2)(y^2-z^2)(z^2-x^2)=0$. By Theorem \ref{thm: not ball quotient}, these arrangements do not give ball quotient through our approach. Instead, we compute the Hodge numbers of the associated surfaces.

\subsection{Relations between Hodge numbers and Chern numbers}
In this section, we shall fix a smooth projective surface $X$. Denote $q=h^{0,1}(X)$ its irregularity and $p=h^{0,2}(X)$ its geometric genus. Denote also $b_i, i=1,2,3,4$ the Betti numbers of $X$ and $c_1^2,c_2$ the Chern numbers, as well as $h^{p,q}$ the Hodge numbers.

Then by Noether's formula (see \cite{BHPV}), we first have
\begin{equation}\label{eq: noether}
1-q+p=\frac{1}{12}(c_1^2+c_2);
\end{equation}
secondly, from the formula for Euler characteristic, we have
\begin{equation}\label{eq: euler}
2-2b_1+b_2=c_2.
\end{equation}
Moreover, from Hodge decomposition and Serre duality, we have
\begin{equation}\label{eq: hodge}
b_1=2q,\quad b_2=h^{0,2}+h^{2,0}+h^{1,1}\quad h^{p,q}=h^{q,p}=h^{2-p,2-q}, p,q=0,1,2.
\end{equation}
We may see the equalities \eqref{eq: noether}, \eqref{eq: euler},\eqref{eq: hodge} as equations for the Hodge numbers $h^{p,q}$'s, assuming known $c_1,c_2,q$, and we have the following solution:
\begin{equation}\label{eq: hodge-chern}
\begin{cases}
h^{0,0}&=h^{2,2}=1,\\
h^{0,1}&=h^{1,0}=h^{1,2}=h^{2,1}=q,\\
h^{0,2}&=h^{2,0}=\frac{1}{12}(c_1^2+c_2)-(1-q),\\
h^{1,1}&=-\frac{1}{6}c_1^2+\frac{5}{6}c_2+2q.
\end{cases}
\end{equation}

\subsection{Computing Hodge numbers via Chern numbers}

In this section, we denote $\mathcal{A}$ a line arrangement in $\bb{P}^2$ and $\widetilde{F}$ be the associated surface. All the Hodge numbers $h^{p,q}$ and Chern numbers $c_1^2,c_2$ are for $\widetilde{F}$, namely, we abbreviate the notations $h^{p,q}(\widetilde{F})$ by $h^{p,q}$ etc.

The irregularity $q$ is closely related to the monodromy of $h^*: H^1(F)\to H^1(F)$, and indeed, $2q=\dim H^1(F)_{\neq 1}$ by \cite{DP11}

We first give the formulae for the Chern numbers of $\ov{F}$. In the first two examples below, $t_r\neq0$ only if $r|d$. By example \ref{ex: resolB}, we have
$$
DCI_{r,d}=-d(r-2)^2,\qquad DCII_{r,d}=d-(r-1)^2.
$$

\begin{ex}(Hesse arrangement) The Hesse arrangement is defined by
$$
Q=xyz((x^3+y^3+z^3)^3-27x^3y^3z^3)
$$
with $d=12$ with $t_2=12, t_4=9$.  Moreover, we have $q=3$, see \cite{BDS}.

For the Chern numbers, we first have by \eqref{eq: c1fbar} that$
K_{\ov{F}}^2=d(d-4)^2=768.
$
Since $DCI_{2,12}=0$ and $DCI_{4,12}=-48$, we have
\begin{eqnarray*}
c_1^2&=&K_{\ov{F}}^2+\sum_rt_rDCI_{r,d}
     =336.
\end{eqnarray*}
So by Proposition \ref{prop: criterion general1}, $\widetilde{F}$ is of general type.
Moreover, by \eqref{eq: c2fbar},
\begin{eqnarray*}
\chi(\ov{F})&=&d(d^2-4d+6)-(d-1)\sum_rt_r(r-1)^2
            =201.
\end{eqnarray*}
Since $DCII_{2,12}=11$ and $DCII_{4,12}=3$, so
\begin{eqnarray*}
c_2&=&\chi(\ov{F})+\sum_rt_rDCII_{r,d}=360.
\end{eqnarray*}
Finally, by Formula \eqref{eq: hodge-chern}, we obtain
$$
\begin{cases}
h^{0,0}&=h^{2,2}=1,\\
h^{0,1}&=h^{1,0}=h^{1,2}=h^{2,1}=q=3,\\
h^{0,2}&=h^{2,0}=\frac{1}{12}(c_1^2+c_2)-(1-q)=60,\\
h^{1,1}&=-\frac{1}{6}c_1^2+\frac{5}{6}c_2+2q=250.
\end{cases}
$$
In addition, the Chern ratio
$$
\frac{c_1^2}{c_2}=\frac{336}{360}=\frac{14}{15}.
$$
\end{ex}

\begin{ex} Consider the arrangement $\mathcal{A}(m,m,3)$ defined by
$$
Q=(x^m-y^m)(y^m-z^m)(z^m-x^m)=0.
$$
Then if $m=3$, we have $t_3=12$ and for $m\neq 3$, we have $t_3=m^2, t_m=3$. In addition,
if $m\equiv 0\mod 3$, then $q=2$, otherwise $q=1$, see \cite{Dpre1}.

Moreover, by Example \ref{ex: resolB}, the following hold:
$$
DCI_{3,d}=-d=-3m,\qquad DCII_{3,d}=d-4=3m-4
$$
and
$$
DCI_{m,d}=-d(m-2)^2=-3m(m-2)^2,\qquad DCII_{m,d}=d-(m-1)^2=3m-(m-1)^2.
$$
Therefore, by \eqref{eq: c1fbar},
\begin{eqnarray}
c_1^2&=&K_{\ov{F}}^2+\sum_rt_rDCI_{r,d}
  =3m(m-2)(5m-2).\label{eq: 33mc1}
\end{eqnarray}
and by \eqref{eq: c2fbar},
\begin{eqnarray}
c_2&=&\chi(\ov{F})+\sum_rt_rDCII_{r,d}
     =9m(m^2-2m+2).\label{eq: 33mc2}
\end{eqnarray}

\begin{enumerate}[(i)]
\item First consider the case where $m=2$. Then $q=1$ and we have from \eqref{eq: 33mc1},
$
c_1^2=0
$
and from \eqref{eq: 33mc2},
$
c_2=36.
$
Therefore, by Formula \eqref{eq: hodge-chern}, we have
$$
\begin{cases}
h^{0,0}&=h^{2,2}=1,\\
h^{0,1}&=h^{1,0}=h^{1,2}=h^{2,1}=q=1,\\
h^{0,2}&=h^{2,0}=\frac{1}{12}(c_1^2+c_2)-(1-q)=3,\\
h^{1,1}&=-\frac{1}{6}c_1^2+\frac{5}{6}c_2+2q=32.
\end{cases}
$$
\item Second, consider the case where $m=3$, then $q=2$ and
we have from \eqref{eq: 33mc1} that
$
c_1^2=117
$
implying that $\widetilde{F}$ is of general type by Proposition \ref{prop: criterion general1} and from \eqref{eq: 33mc2},
$
c_2=135.
$
Therefore, by Formula \eqref{eq: hodge-chern}, we have
$$
\begin{cases}
h^{0,0}&=h^{2,2}=1,\\
h^{0,1}&=h^{1,0}=h^{1,2}=h^{2,1}=q=2,\\
h^{0,2}&=h^{2,0}=\frac{1}{12}(c_1^2+c_2)-(1-q)=22,\\
h^{1,1}&=-\frac{1}{6}c_1^2+\frac{5}{6}c_2+2q=97.
\end{cases}
$$
\item Consider the case where $m>3$ and $m\not\equiv0\Mod3$. Then $q=1$. From \eqref{eq: 33mc1}, we get
$c_1^2=3m(m-2)(5m-2).$
Note that in our situation $m\geq 4$, hence $c_1^2\geq 3\cdot 4\cdot 2\cdot 18=432>9$, hence $\widetilde{F}$ is of general type by Proposition \ref{prop: criterion general1}.

In addition, from \eqref{eq: 33mc2} we have
$
c_2=9m(m^2-2m+2).
$
Therefore, by Formula \eqref{eq: hodge-chern}, we have
$$
\begin{cases}
h^{0,0}&=h^{2,2}=1,\\
h^{0,1}&=h^{1,0}=h^{1,2}=h^{2,1}=q=1,\\
h^{0,2}&=h^{2,0}=\frac{1}{12}(c_1^2+c_2)-(1-q)\\
       &=\frac{1}{2}m(m-1)(4m-5)\\
h^{1,1}&=-\frac{1}{6}c_1^2+\frac{5}{6}c_2+2q\\
       &=5m^3-9m^2+13m+2
\end{cases}\\
$$

\item Finally, we consider the case where $m>3$ and $m\equiv0\Mod3$. Then $q=2$ and form \eqref{eq: 33mc1}, we have
$$
c_1^2=3m(m-2)(5m-2)\geq 3\cdot 6\cdot(6-2)\cdot(5\cdot 6-2)>9,
$$
hence $\widetilde{F}$ is of general type by Proposition \ref{prop: criterion general1}. Moreover, from \eqref{eq: 33mc2}, we have $c_2=9m(m^2-2m+2)$, hence by Formula \eqref{eq: hodge-chern}, we have
$$
\begin{cases}
h^{0,0}&=h^{2,2}=1,\\
h^{0,1}&=h^{1,0}=h^{1,2}=h^{2,1}=q=2,\\
h^{0,2}&=h^{2,0}=\frac{1}{12}(c_1^2+c_2)-(1-q)\\
       &=\frac{1}{2}m(m-1)(4m-5)+1\\
h^{1,1}&=-\frac{1}{6}c_1^2+\frac{5}{6}c_2+2q\\
       &=5m^3-9m^2+13m+4
\end{cases}\\
$$
\end{enumerate}
Conclusion: for $m\geq 3$, the surface $\widetilde{F}$ is of general type. Furthermore, as $m\to\infty$, one can show that $h^{0,2}, h^{1,1}\to\infty$ while other Hodge numbers remains 1 or 2. In addition, the Chern ratio
$$
\frac{c_1^2}{c_2}=\frac{3m(m-2)(5m-2)}{9m(m^2-2m+2)}\to\frac{5}{3}\qquad\text{\rm as }\ m\to\infty.
$$
\end{ex}

\begin{ex}
Now we consider line arrangements which arise from restriction of higher dimensional hyperplane arrangements. The braid arrangement in $\bb{P}^n$ is given by
$$
\mathfrak{B}_n:\quad \prod_{0\leq i<j\leq n}(x_i-x_j)=0,
$$
consisting of $\binom{n+1}{2}$ hyperplanes. Let $E\subseteq\bb{P}^n$ be a \emph{generic} projective plane and let $\mathcal{A}_n=\mathfrak{B}_n|_E$ the restriction of $\mathfrak{B}_n$ to $E$. Then $\mathcal{A}_n$ is a line arrangements in the projective plane with only nodes and triple points such that
$$
d=\binom{n+1}{2}=\frac{n(n+1)}{2}
$$
and
$$
t_3=\binom{n+1}{3}.
$$
Indeed, any triple points of $\mathcal{A}_n$ corresponds to the intersection of exactly three hyperplanes in $\mathfrak{B}_n$, which is thus of the form $\{x_{i_1}=x_{i_2}=x_{i_3}\}$ for some $i_1<i_2<i_3$. Hence
$$
t_3=\#\{(i_1,i_2,i_3)\ :\  i_1,i_2,i_3\in[0,n],\ i_1<i_2<i_3\}=\binom{n+1}{3}.
$$
From Remark \ref{rk: d(d-1)2}, we have
$$
2t_2+6t_3=d(d-1)=\binom{n+1}{2}\biggl(\binom{n+1}{2}-1\biggr),
$$
hence
\begin{eqnarray*}
t_2&=&\frac {d(d-1)}{2}-3t_3
   =\frac{(n+1)n(n-1)(n-2)}{8}.
\end{eqnarray*}
Note that if $n\equiv1\Mod3$, then $d\equiv1\Mod 3$, otherwise $3|d$, so we consider the following two cases:
\begin{enumerate}[\rm(i)]
\item If $n\not\equiv1\Mod3$, we have $3|d$. Moreover, if $n=2,3$, then $q=1$, otherwise $q=0$ by \cite{MP}. In addition,
$$
DCI_{2,d}=0,\qquad DCII_{2,d}=d-1
$$
and
$$
DCII_{3,d}=-d,\qquad DCII_{3,d}=d-4.
$$
Hence,
\begin{eqnarray*}
c_1^2&=& K_{\ov{F}}^2+\sum_rt_rDCI_{r,d}
     =\frac{1}{24}n(n+1)(n-2)(n-3)(3n^2+19n+32),
\end{eqnarray*}
so if $n\geq 4$, $c_1^2>9$ and thus $\widetilde{F}$ is of general type by Proposition \ref{prop: criterion general1}. Moreover,
\begin{eqnarray*}
c_2&=&\chi(\ov{F})+\sum_rt_rDCII_{r,d}
   =\frac{1}{8}n(n+1)(n-2)(n^3+2n^2-3n-12).
\end{eqnarray*}
\item If $n\equiv1\Mod 3$, then $d\equiv1\Mod 3$ and $q=0$ by \cite{MP}. In addition,
$$
DCI_{2,d}=0,\qquad DCII_{2,d}=d-1
$$
and by Example \ref{ex: resolC}, we have
$$
DCII_{3,d}=-(d-1),\qquad DCII_{3,d}=d-1.
$$
Thus,
\begin{eqnarray*}
c_1^2&=& K_{\ov{F}}^2+\sum_rt_rDCI_{r,d}\\
     &=&\frac{1}{24}n(n+1)\biggl(3n^2(n^2-15)+2n(2n^2-21)+188\bigg)
\end{eqnarray*}
so if $n\geq 4$, $c_1^2>9$ and thus $\widetilde{F}$ is of general type by Proposition \ref{prop: criterion general1}. Moreover,
\begin{eqnarray*}
c_2&=&\chi(\ov{F})+\sum_rt_rDCII_{r,d}
   =\frac{1}{8}n(n+1)(n^4-7n^2-2n+20).
\end{eqnarray*}
\end{enumerate}
The concrete formulae for the Hodge numbers by applying \eqref{eq: hodge-chern} are left to the reader. For the Chern ratio, we have
$$
\lim_{n\to\infty}\frac{c_1^2}{c_2}=1.
$$
\end{ex}

\end{document}